\newtheorem{theorem}{Theorem}[section]
\newtheorem{definition}[theorem]{Definition}
\newtheorem{example}[theorem]{Example}
\newtheorem{lemma}[theorem]{Lemma}
\newtheorem{assumption}[theorem]{Assumption}
\newtheorem{remark}[theorem]{Remark}
\newenvironment{proof}[1][Proof]{\textbf{#1.} }{\ \rule{0.5em}{0.5em}}
\newcommand{\E}{{\rm \bf E}}
\newcommand{\prob}{{\rm \bf P}}
\newcommand{\lcp}{{\rm LCP}}
\newcommand{\conv}{{\rm conv}}
\newcommand{\dN}{{\mathbb N}}
\newcommand{\dR}{{\mathbb R}}
\newcommand{\ep}{\varepsilon}
\newcounter{figurecounter}
\begin{document}

\title{Quitting Games and Linear Complementarity Problems%
\thanks{The authors thank Nimrod Megiddo for noticing the connection between our game theoretic condition and $Q$-matrices,
and Efim Gluskin, Boaz Klartag, Orin Munk, and Arik Tamir for useful discussions.
The first author acknowledges the support of the Israel Science Foundation, Grant \#323/13.}}

\author{Eilon Solan and Omri N.~Solan%
\thanks{The School of Mathematical Sciences, Tel Aviv
University, Tel Aviv 6997800, Israel. e-mail: eilons@post.tau.ac.il, omrisola@post.tau.ac.il.}}

\maketitle

\begin{abstract}
We prove that every multiplayer quitting game admits a sunspot $\ep$-equilibrium for every $\ep > 0$,
that is, an $\ep$-equilibrium in an extended game in which the players observe a public signal at every stage.
We also prove that if a certain matrix that is derived from the payoffs in the game is a $Q$-matrix in the sense of linear complementarity problems,
then the game admits a Nash $\ep$-equilibrium for every $\ep > 0$.
\end{abstract}

\noindent
Keywords: Stochastic games, quitting games, stopping games, sunspot equilibrium, linear complementarity problems, $Q$-matrices.

\section{Introduction}

Shapley (1953) introduced the model of stochastic games as a model of dynamic interactions in which
players' actions affect both the stage payoffs and the evolution of the state variable.
Shapley studied the two-player zero-sum model, and proved that the discounted value always exists and that both players have stationary optimal strategies.
This result was extended to the existence of discounted equilibria in multiplayer stochastic games by Fink (1964) and Takahashi (1964).

The equilibrium strategies are not robust, as they depend on the discount factor.
Mertens and Neyman (1981) proposed a solution concept that is robust to variation in the discount factor:
given $\ep > 0$, a strategy profile is a \emph{uniform $\ep$-equilibrium}
if it is a discounted $\ep$-equilibrium for every discount factor sufficiently close to 0.
Mertens and Neyman (1981) proved that every two-player zero-sum stochastic game admits a uniform $\ep$-equilibrium, for every $\ep > 0$,
and Vieille (2000a,2000b) extended this result to two-player non-zero-sum games.
Solan (1999) proved that a uniform $\ep$-equilibrium exists in three-player absorbing games,
which are stochastic games with a single non-absorbing states.
It is still not known whether any multiplayer stochastic game admits a uniform $\ep$-equilibrium, for every $\ep > 0$.

In their study of uniform equilibrium in multiplayer stochasic games,
Solan and Vieille (2001) introduced a new class of absorbing games, called \emph{quitting games},
which is inspired by the game studied by Flesch, Thuijsman, and Vrieze (1997).
In a quitting game, each one of $N$ players decides at every stage whether to continue or to quit.
As long as all players continue, the game continues.
Once at least one player quits, the game terminates, and the terminal payoff depends on the set of players who decide to quit at the terminal stage.
Solan and Vieille (2001) proved that if each player prefers to quit alone rather than to quit with other players,
then a uniform $\ep$-equilibrium exists.
This result was extended to a more general class of quitting games by Simon (2012).

Aumann (1974, 1987) introduced the concept of correlated equilibrium in strategic-form games.
A \emph{correlated equilibrium} in a strategic-form game is an equilibrium in an extended game that includes a correlation device,
which sends to each player a private signal before the play starts.
In dynamic games several variations of correlated equilibrium come to mind (see Forges (1986) and Fudenberg and Tirole (1991)).
The most general concept is \emph{communication equilibrium}, that corresponds to an equilibrium in an extended game
in which at every stage the device receives a private message from each player and sends a private signal to each player,
which may depend on past messages and signals of all players.
A more restricted concept is \emph{extensive-form correlated equilibrium},
which corresponds to the situation in which the device sends a private message to each player at the beginning of every stage,
and does not receive any message from the players.
Yet a more restricted concept is \emph{normal-form correlated equilibrium},
which corresponds to the situation in which the device sends one private message to each player at the beginning of the game.
Cass and Shell (1984) proposed the concept of \emph{sunspot equilibrium},
which is an equilibrium in a game extended by a correlation device that publicly sends to the players at every stage
a uniformly distributed random variable in $[0,1]$ that is chosen independently of past signals and past play.

Solan and Vieille (2002) proved that every multiplayer stochastic game admits an extensive-form uniform correlated $\ep$-equilibrium, for every $\ep > 0$,
and Solan and Vohra (2002) proved that every multiplayer absorbing game admits a normal-form uniform correlated $\ep$-equilibrium, for every $\ep > 0$.
In this paper we prove that every multiplayer quitting game admits a sunspot uniform $\ep$-equilibrium, for every $\ep > 0$.
Solan and Vieille (2001) proved that in quitting games, every undiscounted $\ep$-equilibrium is also uniform,
and their argument carries over to correlated equilibria.

In this paper we prove that every multiplayer quitting game
admits an undiscounted sunspot $\ep$-equilibrium, for every $\ep > 0$.
By the above mentioned result of Solan and Vieille (2001), this implies
that every multiplayer quitting game admits a sunspot uniform $\ep$-equilibrium, for every $\ep > 0$.

Our proof uses heavily the notion of $Q$-matrices from linear complementarity problems (see, e.g., Murty, 1988).
Given an $n \times n$ matrix $R$ and a vector $q \in \dR^n$,
the \emph{linear complementarity problem} $\lcp(R,q)$ is the problem of finding two vectors
$w,z \in \dR^n_{\geq 0} := \{x \in \dR^n \colon x_i \geq 0, \ \ \ \forall i \in [n]\}$
such that (a) $w=Rz+q$ and (b) $w_i = 0$ or $z_i = 0$ for every $i \in [n]$.
A matrix $R$ is called a \emph{$Q$-matrix} if a solution to the problem $\lcp(R,q)$ exists for every $q \in \dR^n$.

Denote by $r^i \in \dR^n$ the terminal payoff in the quitting game if player~$i$ quits alone.
By adding a constant to the payoffs, we can assume without loss of generality that $r^i_i=0$; that is,
each player who quits alone receives 0.
Call a player \emph{normal} if there is a player $j \neq i$ such that $r^j_i \leq 0$:
if player~$i$ knows that player~$j$ is going to quit alone,
he will prefer to quit before him.%
\footnote{Our notion of normal players will be weaker than the one provided here; see Section~\ref{section:normal}.}
Let $I_*$ be the set of all normal players,
and let $\widehat r^i \in \dR^{I_*}$ be the vector $r^i$ restricted to the coordinates that correspond to normal players.
Rename the players so that the normal players are players number $1,2,\cdots,|I_*|$,
and let $\widehat R$ be the $|I_*| \times |I_*|$ matrix whose $i$'th column coincides with $\widehat r^i$.

We show that if the matrix $\widehat R$ is not a $Q$-matrix,
then the quitting game admits a uniform $\ep$-equilibrium for every $\ep > 0$,
and if the matrix $\widehat R$ is a $Q$-matrix,
then for every $\ep > 0$ the quitting game admits a uniform sunspot $\ep$-equilibrium,
in which at every stage at most one player quits with positive probability.

Our contribution is threefold.
First, we prove that every quitting game admits a uniform sunspot $\ep$-equilibrium.
Second, we identify a general condition that ensures that a uniform $\ep$-equilibrium exists in quitting games.
Third, we relate the question of existence of uniform equilibrium in stochastic games to linear complementarity problems.
In particular, our work limits the class of quitting games for which the existence of a uniform $\ep$-equilibrium is not known.
We hope that our result will pave the road to proving the existence of a sunspot $\ep$-equilibrium in every stochastic game
and the existence of a uniform $\ep$-equilibrium in quitting games.

The paper is organized as follows.
The model and the main results are described in Section~\ref{sec:model}.
The proof of the main result appears in Section~\ref{sec:proof}.
In Section~\ref{section:characterization} we discuss the characterization of sunspot equilibrium payoffs when the matrix $\widehat R$ is a $M$-matrix,
namely, a $Q$-matrix for which in each row and each column there is exactly one positive entry.
Discussion and open problems, including a discussion on the extension of our result to stopping games, appear in Section~\ref{sec:discussion}.

\section{The Model and the Main Results}
\label{sec:model}

\subsection{The Model}

A \emph{quitting game} $\Gamma((r^S)_{S \subseteq I})$ is given by
\begin{itemize}
\item   A finite set of players $I = [N] := \{1,2,\cdots,N\}$.
\item   For every subset $S$ of $I$, a vector $r^S \in [-1,1]^N$.
\end{itemize}

Note that we assume w.l.o.g.~that payoffs are bounded by 1.

The game evolves as follows.
At every stage $t \in \dN$ each player decides whether to continue or quit.
Denote by $t^*$ the first stage in which at least one player quits,
and by $S^*$ the set of players who quit at stage $t^*$.
If no player ever quits, then $t^* = \infty$ and $S^* = \emptyset$.
The payoff to the players is $r^{S^*}$.

For convenience, whenever $S = \{i\}$ contains one element we write $r^i$ instead of $r^{\{i\}}$.
We will maintain the following assumption, which states that if a player quits alone, his payoff is 0.
This assumption is made without loss of generality, since adding a constant to the payoffs of a player
does not change his strategic considerations.

\begin{assumption}
\label{assumption:1}
For every $i \in I$ we have $r^i_i = 0$.
\end{assumption}

A quitting game is a strategic-form game in which the set of pure strategies of each player is $\dN \cup \{\infty\}$,
where the interpretation of the pure strategy $\infty$ is that the player never quits.
A (behavior) \emph{strategy} of player~$i$ is a sequence $x_i = (x_i^t)_{t \in \dN}$ of numbers in $[0,1]$,
with the interpretation that $x_i^t$ is the conditional probability that player~$i$ quits at stage $t$,
provided no player quit before that stage.
Denote by $X_i$ the set of all strategies of player~$i$,
by $X_{-i} := \times_{j \in I \setminus \{i\}} X_i$ the set of all strategy profiles of the other players,
and by $X := \times_{i \in I} X_i$ the set of \emph{strategy profiles}.

Every strategy profile $x = (x_i)_{i \in I} \in X$ induces a probability distribution $\prob_x$ over the set of plays.
We denote by $\E_x[\cdot]$ the corresponding expectation operator.
Denote by $\gamma(x) : = \E_x[r^{S^*}]$ the \emph{expected payoff} under strategy profile $x$.
A strategy profile $x$ is an \emph{$\ep$-equilibrium} if for every $i \in I$ and every strategy $x'_i \in X_i$ of player~$i$ we have
\[ \gamma_i(x) \geq \gamma_i(x'_i,x_{-i})-\ep. \]
Using the insights of Flesch, Thuijsman, and Vrieze (1997),
Solan (1999) proved that every three-player absorbing game, hence every three-player quitting game, admits an $\ep$-equilibrium for every $\ep>0$.
Solan and Vieille (2001) and Simon (2012) extended this result to multi-player quitting games that satisfy various conditions.

A strategy $x_i = (x_i^t)_{t \in \dN} \in X_i$ is \emph{stationary} if $x_i^t = x_i^{t'}$ for every $t,t' \in \dN$.
In this case we denote by $x_i$ the probability by which player~$i$ quits at every stage,
and we view a stationary strategy profile $x=(x_i)_{i \in I}$ as a vector in $[0,1]^N$.
We denote by $C_i$ (resp.~$Q_i$) the stationary strategy of player~$i$ in which he always continues (resp.~always quits).

The following observation asserts that if there is a normal player~$i$ such that $r^i \in \dR^N_{\geq 0}$,
then a stationary $\ep$-equilibrium exists.

\begin{lemma}
\label{lemma:stationary:eq}
If there is a normal player $i \in I_*$ such that $r^i \in \dR^N_{\geq 0}$,
then a stationary $\ep$-equilibrium exists, for every $\ep > 0$.
\end{lemma}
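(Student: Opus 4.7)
My plan is to construct a stationary $\ep$-equilibrium in which only player $i$ and a witness $j$ to $i$'s normality quit with positive probability, with $j$'s rate much smaller than $i$'s. Fix any $j \neq i$ with $r^j_i \leq 0$, which exists because $i \in I_*$. For parameters $0 < p, c < 1$ to be chosen as functions of $\ep$, define the stationary profile $x^*$ by $x^*_i = p$, $x^*_j = q := cp$, and $x^*_k = 0$ for $k \notin \{i,j\}$. The intuition is that ``only $i$ quits'' is the dominant terminating event, giving an outcome close to $r^i \geq 0$; the slow quitting of $j$ acts as a stationary ``threat'' that deters $i$ from never quitting, since otherwise $j$ would eventually quit alone and deliver $i$ the payoff $r^j_i \leq 0$.

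The first step is the payoff calculation. Under $x^*$ the game terminates almost surely at a geometric time with parameter $p+q-pq$; conditional on termination, $S^*$ equals $\{i\}$, $\{j\}$, or $\{i,j\}$ with probabilities proportional to $p(1-q)$, $(1-p)q$, and $pq$. A direct computation then yields $\gamma_m(x^*) = (r^i_m + c\, r^j_m)/(1+c) + O(p)$ for every player $m$, which tends to $r^i_m$ as $p, c \to 0$.

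The second step is the best-response analysis. Against a stationary opponent profile, player $m$ faces a one-state Markov decision problem; a routine computation shows that its value $V_m(x'_m)$ is a ratio of linear functions of $x'_m$ whose derivative has constant sign, so the best deviation lies in $\{0,1\}$. I then bound three deviation gains. For player $i$: always-continue yields $r^j_i \leq 0$, always-quit yields $q\, r^{\{i,j\}}_i$, and $\gamma_i(x^*) \in [r^j_i,0]$, so the gain is $O(c)$. For player $j$: always-continue yields $r^i_j \geq 0$, always-quit yields $p\, r^{\{i,j\}}_j$, and $\gamma_j(x^*) = r^i_j/(1+c) + O(p)$, so the gain is $O(p+c)$. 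For $k \notin \{i,j\}$: always-quit yields at most $p+q$ in absolute value, and $\gamma_k(x^*)$ is within $O(c)$ of $r^i_k \geq 0$, so the gain is $O(p+c)$. Choosing $p$ and $c$ both of order $\ep$ makes every gain at most $\ep$.

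The main obstacle is the calibration of the threat parameter $c$. Player $i$'s deterrent against always continuing has magnitude $c\,|r^j_i|$, so $c$ must be small to keep $\gamma_i$ near zero; but player $i$'s always-quit deviation contributes $cp\, r^{\{i,j\}}_i$, which is $O(cp)$ and therefore dominated by the threat-bound $O(c)$ exactly when $q=cp$ is asymmetric with $q \ll p$ rather than $q$ of order $p$. The boundary case $r^j_i = 0$ is covered automatically: always-continue yields a gain of $0$, and only the $O(cp)$ always-quit deviation remains to be controlled. The monotonicity of $V_m(x'_m)$, which reduces each best-response check to a comparison of two pure deviations, is the routine but indispensable technical ingredient.
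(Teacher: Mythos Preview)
Your proposal is correct and takes essentially the same approach as the paper. The paper simply declares that the stationary profile $x_i=\ep$, $x_j=\ep^2$, $x_k=0$ for $k\notin\{i,j\}$ is a $4\ep$-equilibrium and leaves the verification to the reader; your parametrization $p=\ep$, $c=\ep$ (so $q=cp=\ep^2$) reproduces exactly this profile, and your best-response analysis is precisely the verification the paper omits.
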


\begin{proof}
Since player~$i$ is normal, there is $j \in I$ such that $r^j_i \leq 0$.
The reader can verify that the following stationary strategy profile $x$
is a $4\ep$-equilibrium:
\[ x_i = \ep, \ \ \ x_j = \ep^2, \ \ \ x_k = C_k \ \ \forall k \in I \setminus \{i,j\}. \]
\end{proof}

\bigskip

Solan (1999) proved that in every three-player quitting game there is an $\ep$-equilibrium of one of two simple forms:
there is always a stationary $\ep$-equilibrium or
an $\ep$-equilibrium in which at every stage at most one player quits with positive probability.
Solan and Vieille (2002) provided a four-player quitting games
in which there is an $\ep$-equilibrium,
yet for $\ep > 0$ sufficiently small there is neither a stationary $\ep$-equilibrium nor
an $\ep$-equilibrium in which at every stage at most one player quits with positive probability.
To date it is not known whether four-player quitting games admit $\ep$-equilibria for every $\ep > 0$.

\subsection{Sunspot Equilibrium}

We enrich the game by introducing a public correlation device.
That is, at the beginning of every stage $t \in \dN$ the players observe a public signal $s^t \in [0,1]$ that is drawn by the uniform distribution,
independently of past signals.

A strategy of player~$i$ in the game with public correlation device is a sequence of measurable functions $\xi_i = (\xi_i^t)_{t \in \dN}$,
where $\xi^t_i : [0,1]^{t} \to [0,1]$.
The interpretation of $\xi_i^t$ is that if no player quits before stage $t$,
then at stage~$t$ player~$i$ quits with probability $\xi_i^t(s^1,s^2,\cdots,s^t)$.

Every strategy profile $\xi = (\xi_i)_{i \in I}$ induces a probability distribution over the set of plays in the game with public correlation device,
with a corresponding expectation operator that is denoted by $\E_\xi[\cdot]$.
Denote by $\gamma(\xi) := \E_\xi[r^{S_*}]$ the expected payoff under strategy profile $\xi$.

\begin{definition}
A strategy profile $\xi$ is a \emph{sunspot $\ep$-equilibrium} if it is an $\ep$-equilibrium in the game with public correlation device, that is, if
for every $i \in I$ and every strategy $\xi'_i$ of player~$i$ we have
\[ \gamma_i(\xi) \geq \gamma_i(\xi'_i,\xi_{-i})-\ep. \]
\end{definition}

The main result of this paper is the following.
\begin{theorem}
\label{theorem:main}
Every quitting game admits a sunspot $\ep$-equilibrium, for every $\ep > 0$.
\end{theorem}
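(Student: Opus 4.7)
The plan is to prove the theorem via the structural dichotomy announced in the introduction. Either the reduced payoff matrix $\widehat R$ is not a $Q$-matrix, in which case a uniform (non-sunspot) $\ep$-equilibrium exists, or $\widehat R$ is a $Q$-matrix, in which case a sunspot $\ep$-equilibrium exists in which at most one player quits per stage. Either branch produces a sunspot $\ep$-equilibrium, so the theorem follows. As a preliminary simplification, Lemma~\ref{lemma:stationary:eq} already handles the sub-case in which some normal player $i$ satisfies $r^i \in \dR^N_{\geq 0}$, so I would assume henceforth that every single-quit payoff vector of a normal player has at least one strictly negative coordinate, which is the regime in which the $Q$-matrix dichotomy is the genuine discriminator.

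For case (i), where $\widehat R$ is not a $Q$-matrix, there exists some $q \in \dR^{|I_*|}$ not covered by any complementary cone of $\widehat R$. I would translate this combinatorial obstruction into a uniform $\ep$-equilibrium via a Lemke-type pivoting argument: the failure of Lemke's algorithm on $\lcp(\widehat R,q)$ terminates in a secondary ray, and this ray supplies a nonzero direction $y \geq 0$ in $\dR^{|I_*|}_{\geq 0}$ with $\widehat R y \geq 0$ and complementary slackness $y_i(\widehat R y)_i = 0$. Rescaling $y$ to have small total mass, and letting the non-normal players always continue, yields a stationary mixed profile in which each mixing normal player's continuation value is approximately $0 = r^i_i$ and every other player strictly prefers to continue, giving a stationary $\ep$-equilibrium.

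For case (ii), where $\widehat R$ is a $Q$-matrix, a stationary $\ep$-equilibrium may fail to exist (for example, by the four-player counterexample of Solan and Vieille (2002) cited in Section~\ref{sec:model}), so the sunspot mechanism is genuinely needed. My plan is to build the sunspot $\ep$-equilibrium stage by stage, using the $Q$-matrix property as a one-step continuation guarantee. At each stage, given the current target continuation vector $q$, the $Q$-matrix property supplies a solution $(w,z)$ of $\lcp(\widehat R, q)$; the public signal is then used to designate at most one player, drawn from the support of $z$, to quit at the current stage with a small designated quitting probability, and the resulting vector $w$ (renormalized by the no-quit probability) serves as the target continuation for the next stage. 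Complementary slackness ensures that the designated quitter is indifferent between quitting and continuing, while $w \in \dR^{|I_*|}_{\geq 0}$ ensures that no other normal player can gain by unilaterally quitting.

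The main obstacle is case (ii): one must turn the one-step LCP construction into a globally coherent infinite strategy profile whose continuation values support the equilibrium conditions at every stage. Simultaneous difficulties are (a) keeping each stage's designated quitting probability small enough to control higher-order multi-quitter effects, (b) making the cumulative quitting probability sum to one so the expected payoff is well-defined, and (c) verifying incentive compatibility for the non-normal players, who are suppressed when forming $\widehat R$; the weakened notion of normality alluded to in the footnote is presumably tailored precisely to make this last step go through. By comparison, case (i) should be relatively routine once the Lemke secondary-ray interpretation is in place.
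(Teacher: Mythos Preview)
Your overall dichotomy (not a $Q$-matrix versus $Q$-matrix) matches the paper's Theorem~\ref{theorem:1}, but your execution of case~(i) contains a genuine error, and your plan for case~(ii) misses the main technical device.

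\textbf{Case (i).} Your Lemke secondary-ray claim is false as stated. You assert that failure of Lemke on $\lcp(\widehat R,q)$ yields a nonzero $y\in\dR^n_{\geq 0}$ with $\widehat R y\in\dR^n_{\geq 0}$ and $y_i(\widehat R y)_i=0$; equivalently, that ``$\widehat R$ is not a $Q$-matrix'' implies ``$\lcp(\widehat R,\vec 0)$ has a nontrivial solution.'' This implication does not hold. Take $n=2$ and $\widehat R=\begin{pmatrix}0&-1\\-1&0\end{pmatrix}$ (both players normal, Assumption~\ref{assumption:1} satisfied). For $q=(-1,-1)$ the problem $\lcp(\widehat R,q)$ has no solution, so $\widehat R$ is not a $Q$-matrix; yet $\widehat R y=(-y_2,-y_1)$, so $\widehat R y\geq 0$ and $y\geq 0$ force $y=0$. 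Thus no such ray direction exists, and your stationary profile cannot be extracted this way. The paper does \emph{not} argue algorithmically: it takes a $q$ with $\lcp(\widehat R,q)$ unsolvable, embeds it as the non-absorbing payoff $r^\emptyset$ of an auxiliary quitting game, and studies the limit $x^0=\lim_{\lambda\to 0}x^\lambda$ of stationary discounted equilibria. Three cases on $x^0$ (absorbing with $\geq 2$ quitters, absorbing with one quitter, non-absorbing) each yield a stationary $\ep$-equilibrium, the last case producing a would-be solution of $\lcp(\widehat R,q)$ and hence a contradiction. Note also that the paper treats ``$\lcp(\widehat R,\vec 0)$ has a nontrivial solution'' as a \emph{separate} preliminary case (Lemma~\ref{lemma:observation}), precisely because it is logically independent of the $Q$-matrix property.

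\textbf{Case (ii).} Your one-step scheme (solve $\lcp(\widehat R,q)$, draw a quitter from $z$, update the target) is the right intuition and matches the paper's building block, but you correctly flag obstacle~(b) --- cumulative termination --- without proposing a resolution, and this is exactly where the work lies. The paper's building block (Theorem~\ref{theorem:2}) produces, for each $y\in\partial D$, a new target $w(y)\in\partial D$; the map $y\mapsto w(y)$ has no fixed point but also no uniform lower bound on the per-step quitting mass $\sum_i z_i$. To force termination the paper proves an abstract approximation result (Theorem~\ref{theorem:approximate}) via transfinite iteration of $w$, extracting a finite sequence $(y^k)_{k=1}^K$ along which $\sum_k\|y^k-w(y^k)\|_\infty$ is large while $\sum_k\|y^{k+1}-w(y^k)\|_\infty$ is small, and then concatenates the corresponding auxiliary games backwards in time. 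Nothing in your plan anticipates this; a forward iteration of the LCP map, as you describe, gives no control on total absorption probability.
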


\subsection{Normal and Abnormal Players}
\label{section:normal}

Simon (2012) defined the concepts of normal and abnormal players in quitting games.
According to Simon (2012), player~$i$ is \emph{normal} if there is a player $j \neq i$ such that $r^j_i \leq r^i_i$,
and he is \emph{abnormal} otherwise.
To prove our main result we could use these notions.
However, below we provide a condition that ensures that the game admits a stationary $\ep$-equilibrium,
for every $\ep > 0$; see Theorem~\ref{theorem:1}.
This condition can be proven using a weaker notion of normality, hence we elect to use this weaker notion and define it in this section.

Define inductively
\begin{eqnarray*}
I_0 &:=& I,\\
I_{l+1} &:=& \{i \in I_l \colon \hbox{ there exists } j \in I_l \hbox{ such that } r^j_i \leq 0\}.
\end{eqnarray*}
For each player $i \in I_{l+1}$ there is a player $j \in I_l$ who gives, by quitting alone, a nonpositive payoff to player~$i$.
Consequently,
\begin{equation}
\label{equ:abnormal}
i \in I_l, j \not\in I_l \ \ \ \Rightarrow \ \ \ r^i_j > 0.
\end{equation}
The sequence $(I_l)_{l \in \dN}$ is a decreasing sequence of sets, hence $I_* := \cap_{l \in \dN} I_l$ exists.
Eq.~(\ref{equ:abnormal}) implies that
\begin{equation}
\label{equ:abnormal1}
i \in I_*, j \not\in I_* \ \ \ \Rightarrow \ \ \ r^i_j > 0.
\end{equation}

\begin{definition}
Every player in the set $I_*$ is called \emph{normal},
and every player in the complement of $I_*$ is called \emph{abnormal}.
The number of normal players is denoted $n := |I_*|$.
\end{definition}

A player is normal according to Simon's (2012) definition if he belongs to the set $I_1$.
Thus, our definition of normality is a recursive application of Simon's definition.

Below we will construct sunspot $\ep$-equilibria in which at every stage at most one player quits.
Every player who quits alone receives 0, while by Eq.~\eqref{equ:abnormal1} every abnormal player receives a positive payoff when a normal player quits.
It is therefore not a wonder that in those equilibria we can ignore abnormal players:
if we can construct an equilibrium in which at every stage only one player quits, and this player is a normal player,
then the payoff of an abnormal player is positive,
hence he will not quit alone.

The following result allows us to assume that the set of normal players is nonempty.
Since there cannot be a single normal player, we deduce from this result that if an $\ep$-equilibrium does not exist for every $\ep > 0$,
then there are at least two normal players.
In fact, Theorem~\ref{theorem:1} below implies that if there are two normal players, then a stationary $\ep$-equilibrium exists.
\begin{lemma}
\label{lemma:25}
If all players are abnormal then there is a stationary $\ep$-equilibrium for every $\ep > 0$.
\end{lemma}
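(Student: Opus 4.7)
The idea is to single out one player $i^* \in I$ whose solo-quit payoff vector $r^{i^*}$ is strictly positive on every coordinate other than $i^*$ itself, and then have $i^*$ quit with a small probability at every stage while every other player continues. This reduces the problem to checking a one-player-quits profile, exactly in the spirit of Lemma~\ref{lemma:stationary:eq}.

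To produce $i^*$: since all players are abnormal, $I_* = \bigcap_{l} I_l = \emptyset$, and because the $I_l$ form a nested sequence of finite sets, there is a largest $l^*$ with $I_{l^*}\neq \emptyset$, and consequently $I_{l^*+1} = \emptyset$. Pick any $i^* \in I_{l^*}$. For $j \neq i^*$ with $j \notin I_{l^*}$, Eq.~(\ref{equ:abnormal}) applied to $i=i^*$ and $l=l^*$ yields $r^{i^*}_j > 0$. For $j \neq i^*$ with $j \in I_{l^*}$, the fact that $j$ failed to enter $I_{l^*+1}$ says that no $k \in I_{l^*}\setminus\{j\}$ has $r^k_j \leq 0$, and in particular $r^{i^*}_j > 0$. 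Thus $r^{i^*}_j > 0$ for every $j \neq i^*$.

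Now consider the stationary profile $x$ with $x_{i^*} = \delta$ for a small $\delta > 0$ and $x_k = 0$ for $k \neq i^*$. Under $x$ the game terminates almost surely with $i^*$ quitting alone, so the realized payoff vector is $r^{i^*}$. Player $i^*$ receives $0$ under $x$ and also under every deviation, since he remains the unique possible quitter and $r^{i^*}_{i^*}=0$. Each $j \neq i^*$ receives $r^{i^*}_j > 0$ under $x$. Because opponents play stationary and the state is constant until absorption, a best reply for $j$ may be taken stationary; a short monotonicity computation for the payoff $g(p)=((1-p)\delta\, r^{i^*}_j + p\delta\, r^{\{i^*,j\}}_j)/(p(1-\delta)+\delta)$ under the stationary deviation ``quit with probability $p$'' gives $g'(p)$ of the sign of $\delta\, r^{\{i^*,j\}}_j - r^{i^*}_j$, which is negative once $\delta < r^{i^*}_j$ (using $|r^{\{i^*,j\}}_j|\le 1$). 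Choosing $\delta < \min_{j\neq i^*} r^{i^*}_j$ therefore makes $p=0$ optimal for every $j \neq i^*$, so $x$ is an exact Nash equilibrium, and \emph{a fortiori} a stationary $\ep$-equilibrium for every $\ep > 0$.

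The main obstacle is the existence of $i^*$: this is the only step that uses the hypothesis $I_*=\emptyset$, and it is precisely where the recursive definition of normality (stronger than Simon's one-step version) is essential, since the emptiness of $I_{l^*+1}$ is what rules out any $j \in I_{l^*}\setminus\{i^*\}$ with $r^{i^*}_j \leq 0$. Once $i^*$ is in hand, the equilibrium is the same one-player-slowly-quits construction of Lemma~\ref{lemma:stationary:eq}.
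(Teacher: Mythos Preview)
Your construction of the player $i^*$ with $r^{i^*}_j>0$ for every $j\neq i^*$ is correct and is exactly the argument the paper gives (the paper separates the case $I_1=\emptyset$ from $I_1\neq\emptyset$, but your uniform treatment via the maximal $l^*$ with $I_{l^*}\neq\emptyset$ is fine). The computation showing that no $j\neq i^*$ gains by quitting when $\delta$ is small is also correct.

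The gap is in the sentence ``Player $i^*$ receives $0$ under $x$ and also under every deviation, since he remains the unique possible quitter.'' This overlooks the deviation $C_{i^*}$: if $i^*$ always continues and everyone else continues, the game never terminates and the payoff is $r^\emptyset$, not $r^{i^*}$. Hence $\gamma_{i^*}(C_{i^*},x_{-i^*})=r^\emptyset_{i^*}$, which may well be strictly positive (nothing in the hypothesis $I_*=\emptyset$ constrains $r^\emptyset$). In that case your profile is not even an $\ep$-equilibrium for $\ep<r^\emptyset_{i^*}$, let alone an exact one.

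This is precisely why the paper does not stop at a one-player-quits profile. When $l^*\geq 1$ one has $i^*\in I_{l^*}\subseteq I_1$, so there exists $j\neq i^*$ with $r^j_{i^*}\leq 0$; the paper then invokes Lemma~\ref{lemma:stationary:eq}, letting $j$ quit with probability $\ep^2$ as a threat that makes ``continue forever'' unattractive for $i^*$. When $l^*=0$ (i.e., $I_1=\emptyset$) no such threat player exists, and the paper instead distinguishes two sub-cases according to the sign of $r^\emptyset$: if $r^\emptyset\in\dR^N_{\geq 0}$ then all-continue is a $0$-equilibrium, while if $r^\emptyset_i<0$ for some $i$ one lets that particular player $i$ be the slow quitter (so that continuing forever is now \emph{bad} for him). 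Your argument can be repaired along the same lines, but as written it is incomplete.
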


\begin{proof}
If $I_1 = \emptyset$ then $r^i \in \dR^N_{\geq 0}$ for every $i \in I$.
If $r^\emptyset \in \dR^N_{\geq 0}$, then the stationary strategy profile in which all players continue is a 0-equilibrium.
If there is a player $i \in I$ for which $r^\emptyset_i < 0$, then the stationary strategy profile
in which all players except player~$i$ continue, and player~$i$ quits at every stage with probability $\ep$,
is a $2\ep$-equilibrium, for every $\ep > 0$.

Suppose now that $I_1 \neq\emptyset$,
let $l \geq 1$ be the maximal index such that $I_l \neq \emptyset$,
and let $i \in I_l$ be arbitrary.
Since $i \in I_l \subseteq I_1$, there is some player $j \neq i$ such that $r^j_i \leq 0$.
Since $I_{l+1} = \emptyset$,
for every player $k \in I_l \setminus \{i\}$ we have $r^i_k > 0$.
By Eq.~(\ref{equ:abnormal}), for every player $k \not\in I_l$ we have $r^i_k > 0$.
It follows that $r^i \in \dR^n_{\geq 0}$, and therefore by Lemma~\ref{lemma:stationary:eq} a stationary $\ep$-equilibrium exists for every $\ep > 0$.
\end{proof}

\subsection{Linear Complementarity Problems and the Main Result}

Let $r^1,r^2,\cdots,r^n$ be $n$ vectors in $\dR^n$, and let $q \in \dR^n$.
The \emph{linear complementarity problem} $\lcp((r^i)_{i=1}^n,q)$
is the following problem that consists of linear equalities and inequalities:
\begin{eqnarray}
\nonumber
\hbox{Find}&&w \in \dR^n_{\geq 0}, \hbox{ and } z = (z_0,z_1,\cdots,z_n) \in \Delta(\{0,1,\cdots,n\}),\label{lpc}\\
\hbox{such that}
&& w = z_0q + \sum_{i =1}^n z_i r^i,\\
&&z_i = 0 \hbox{ or } w_i = 0, \ \ \ \forall i \in [n].
\nonumber
\end{eqnarray}
The last condition in the problem~(\ref{lpc}) is the \emph{complementarity condition}.

We note that for $q \in \dR^n_{\geq 0}$ there is always at least one solution to the problem~(\ref{lpc}), namely, $z = (1,0,\cdots,0)$ and $w = q$.

\begin{remark}
Let $R$ be an $n \times n$ matrix, and let $q \in \dR^n$.
In the literature, the linear complementarity problem $\lcp(R,q)$
is the following problem that consists of linear equalities and inequalities:
\begin{eqnarray}
\hbox{Find}&&z,w \in \dR^n_{\geq 0},\nonumber\\
\hbox{such that}
&&w = q + Rz,\label{lpc1}\\
&&z_i = 0 \hbox{ or } w_i = 0, \ \ \ \forall i \in [n].
\nonumber
\end{eqnarray}
We call the problem~(\ref{lpc}) a linear complementarity problem since it is obtained from problem~(\ref{lpc1})
by multiplication by the positive real number $z_0$,
provided $z_0 > 0$.
Lemma~\ref{lemma:observation} below implies that in our application, when $z_0 = 0$ a stationary $\ep$-equilibrium exists,
hence in the cases in which stationary $\ep$-equilibria do not exist the two problems~(\ref{lpc}) and~(\ref{lpc1}) are equivalent.
\end{remark}

\begin{definition}
An $n\times n$ matrix $R$ is called a \emph{$Q$-matrix} if for every $q \in \dR^n$ the linear complementarity problem $\lcp(R,q)$ has at least one solution.
\end{definition}

The authors are not aware of any characterization of $Q$-matrices.
The following example illustrates the concept of $Q$-matrices.

\begin{example}
\label{example:1}
Let $R$ be a $3 \times 3$ matrix that has the following sign form:
\[ R = \left(
\begin{array}{ccc}
0 & + & - \\
- & 0 & + \\
+ & - & 0
\end{array}
\right) \]
Theorem 6.2.7 in Berman and Plemmons (1994) implies that
the matrix $R$ is a $Q$-matrix if and only its determinant is positive.
\end{example}

Recall that the number of normal players is denoted by $n$.
Order the players so that the set of normal players is $I_* = [n]$.
For every normal player $i \in I_*$ we denote by $\widehat r^i \in \dR^n$ the restriction of the vector $r^i$ to the coordinates in $I_*$.
Denote by $\widehat R$ the $n \times n$ matrix whose $i$'th column is $\widehat r^i$.
By Assumption~\ref{assumption:1}, all entries on the diagonal of $\widehat R$ are 0.

Denote by $\vec 0$ the vector all of whose coordinates are 0.
The following observation states that if the linear complementarity problem $\lcp(\widehat R,\vec 0)$ has a nontrivial solution, then the game possesses
a stationary $\ep$-equilibrium for every $\ep > 0$.

\begin{lemma}
\label{lemma:observation}
If there is a solution $(w,z)$ of the linear complementarity problem $\lcp(\widehat R,\vec 0)$ that satisfies $z_0 < 1$,
then there is a stationary $\ep$-equilibrium for every $\ep > 0$.
\end{lemma}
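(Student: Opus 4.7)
The plan is to turn the LCP solution $(w,z)$ into a stationary profile with small quitting rates proportional to $z$, splitting on the size of the support $K := \{k \in I_* : z_k > 0\}$, which is nonempty since $z_0 < 1$. Set $Z := \sum_{i \in I_*} z_i = 1-z_0 > 0$.

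If $|K|=1$, say $K = \{k\}$, then $w = z_k r^k$, so nonnegativity of $w$ forces $r^k_i \geq 0$ for every normal $i$; Eq.~(\ref{equ:abnormal1}) forces $r^k_i > 0$ for every abnormal $i$; and $r^k_k = 0$ by Assumption~\ref{assumption:1}. Thus $r^k \in \dR^N_{\geq 0}$, and Lemma~\ref{lemma:stationary:eq} supplies a stationary $\ep$-equilibrium directly.

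If $|K| \geq 2$, I would define the stationary profile $x^{(\delta)}$ by $x^{(\delta)}_i := \delta z_i$ for $i \in I_*$ and $x^{(\delta)}_i := 0$ for abnormal $i$, and let $\delta \to 0$. Since simultaneous quits contribute only $O(\delta^2)$ to per-stage absorption, a geometric-series computation yields
\[
  \gamma_k\!\left(x^{(\delta)}\right) \;=\; \frac{\sum_{i \in I_*} z_i\, r^i_k}{Z} + O(\delta),\qquad k \in I.
\]
For normal $k$ the numerator equals $w_k$; for abnormal $k$, every $r^i_k$ with $i \in I_*$ is strictly positive by Eq.~(\ref{equ:abnormal1}), so the limit payoff exceeds a fixed positive constant. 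A best reply of $k$ against stationary $x^{(\delta)}_{-k}$ can be taken stationary, and since the induced value $V_k(a)$ is a ratio of affine functions of the quitting probability $a$, its maximum on $[0,1]$ is attained at an endpoint; it therefore suffices to compare the profile payoff with $Q_k$ and $C_k$. The $Q_k$-deviation pays $r^k_k + O(\delta) = O(\delta)$, and the $C_k$-deviation pays $w_k/(Z - z_k) + O(\delta)$, which is well defined because $|K| \geq 2$ forces $Z - z_k > 0$. Complementarity $z_k w_k = 0$ together with $w_k \geq 0$ then implies that, up to $O(\delta)$, neither deviation exceeds the profile payoff: if $z_k > 0$ then $w_k = 0$ and all three values are $O(\delta)$; if $z_k = 0$ then the $C_k$-value equals the profile value $w_k/Z \geq 0$ and the $Q_k$-value is weaker. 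Abnormal players prefer to continue by a fixed positive margin. Taking $\delta$ small enough that all $O(\delta)$ corrections fall below $\ep$ completes the argument.

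The main obstacles I foresee are quantitative rather than conceptual: uniformizing the $O(\delta)$ bounds across all stationary deviations, and justifying explicitly that best replies against stationary opponents in an absorbing game may be chosen among $\{C_k, Q_k\}$ via the Möbius-shape of $V_k(a)$. Both are standard facts about absorbing games but are the points at which the plan relies on details beyond the bare LCP hypothesis.
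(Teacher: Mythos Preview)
Your proposal is correct and follows essentially the same approach as the paper: the same case split on $|K|$, the same application of Lemma~\ref{lemma:stationary:eq} when $|K|=1$, and the same stationary profile $x_i = \delta z_i$ with the same $Q_k$/$C_k$ deviation checks and use of complementarity when $|K|\geq 2$. If anything, you are slightly more careful than the paper in invoking Eq.~(\ref{equ:abnormal1}) to extend $\widehat r^k \in \dR^n_{\geq 0}$ to $r^k \in \dR^N_{\geq 0}$ in the singleton case, and in justifying why it suffices to check only the endpoints $C_k$ and $Q_k$.
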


We note that If the linear complementarity problem $\lcp(\widehat R,q)$ has a nontrivial solution $(w,z)$ with $z_0 = 0$,
then the problem $\lcp(\widehat R,\vec 0)$ has a nontrivial solution as well.

\bigskip

\begin{proof}
Let $(w,z)$ be a solution of the linear complementarity problem $\lcp(\widehat R,\vec 0)$ that satisfies $z_0 < 1$
and let $J := \{ i \in I_* \colon z_i > 0\}$.
Then $w_i = 0$ for every $i \in J$ and
$w = \sum_{i \in J} z_i r^i$.

If $J=\{i\}$ contains a single player, then necessarily $r^i = w \in \dR^N_{\geq 0}$,
and by Lemma~\ref{lemma:stationary:eq} there is a stationary $\ep$-equilibrium, for every $\ep > 0$.
Assume then that $|J| \geq 2$.

Fix $\ep > 0$ sufficiently small and consider the following stationary strategy profile $x$:
each player $i$ quits at every stage with probability $\ep z_i$.
Then $\left\|\gamma(x) - \sum_{i \in I_*} \frac{z_i}{1-z_0}r_i\right\|_\infty < 2\ep$,
so that $\left|\gamma_i(x) - \frac{w_i}{1-z_0}\right| < 2\ep$ for every $i \in I_*$,
and $\gamma_i(x) > -2\ep$ for every $i \not\in I_*$.

Since each player $i \in I_*$ quits with probability $\ep z_i$, it follows that no player can profit much by quitting:
\[ \gamma_i(Q_i,x_{-i}) \leq 2\ep \leq \gamma_i(x) + 4\ep. \]
If some player $i \in I_*$ with $z_i > 0$ deviates by always continuing, then his payoff $\gamma_i(C_i,x_{-i})$ is close to
$\sum_{j \in I_* \setminus \{i\}} \frac{z_j}{1-z_0-z_i} r^j_i$.
Note that since $|J| \geq 2$ the denominator does not vanish.
Since $z_i > 0$ we have $w_i = 0$, hence
$\sum_{j \in I_* \setminus \{i\}} \frac{z_j}{1-z_0-z_i} r^j_i = 0 \leq \gamma_i(x) + 2\ep$,
so the profit by this deviation is not large either.
The result follows.
\end{proof}

\bigskip

Lemma~\ref{lemma:25} and Lemma~\ref{lemma:observation}
show that for every $\ep > 0$ a stationary $\ep$-equilibrium exists
as soon as $I_* = \emptyset$,
or $I_* \neq \emptyset$ and the linear complementarity problem $\lcp(\widehat R,\vec 0)$ has a nontrivial solution.
The following theorem, which is proven in the next section,
completes the proof of Theorem~\ref{theorem:main}.
In addition to proving that in every quitting game a sunspot $\ep$-equilibrium exists,
it links the property of $\widehat R$ being a $Q$-matrix to the structure of the sunspot $\ep$-equilibria in the quitting game.

\begin{theorem}
\label{theorem:1}
Suppose that $I_* \neq \emptyset$ and
the linear complementarity problem $\lcp(\widehat R,\vec 0)$ does not have a nontrivial solution.
\begin{enumerate}
\item
If the matrix $\widehat R$ is not a $Q$-matrix, then the quitting game $\Gamma((r^S)_{S \subseteq I})$
has a stationary $\ep$-equilibrium, for every $\ep > 0$.
\item
If the matrix $\widehat R$ is a $Q$-matrix,
then for every $\ep > 0$ the quitting game $\Gamma((r^S)_{S \subseteq I})$ has a sunspot $\ep$-equilibrium
in which at every stage at most one player quits with positive probability.
\end{enumerate}
\end{theorem}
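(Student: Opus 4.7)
The plan is to split into the two cases and use the LCP structure differently in each.

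For Part~1, the idea is to leverage the failure of the $Q$-matrix property to build a stationary $\ep$-equilibrium. Since $\widehat R$ is not a $Q$-matrix, there is some $q^* \in \dR^n$ for which $\lcp(\widehat R, q^*)$ has no solution, and a Farkas-type alternative theorem produces a nonzero certificate $y \in \dR^n_{\geq 0}$ witnessing this infeasibility. This $y$ would play the role of a stationary mixed distribution of quit probabilities for the normal players. The non-solvability of $\lcp(\widehat R, \vec 0)$ alone rules out the direct construction from Lemma~\ref{lemma:observation}, but pairing $y$ with a small quit rate for a suitably chosen abnormal player (whose payoff vector plays the role of $q^*$, forcing the LCP certificate to be activated) supplies the missing complementarity and yields an $\ep$-equilibrium. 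The abnormal player's own incentive conditions follow from~(\ref{equ:abnormal1}), which guarantees a strictly positive expected payoff to abnormal players whenever some normal player quits.

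For Part~2, I would construct the sunspot $\ep$-equilibrium iteratively using the $Q$-matrix property of $\widehat R$. The construction maintains a state vector $v^{(t)} \in \dR^n_{\geq 0}$ representing the normal players' continuation payoffs at stage~$t$. At each stage, the $Q$-matrix property provides a solution to an LCP yielding both a sunspot distribution $\lambda^{(t)} \in \Delta(\{0,1,\ldots,n\})$ and the updated state $v^{(t+1)}$, related by
\[ v^{(t)} = \widehat R\,\lambda^{*(t)} + \lambda_0^{(t)}\,v^{(t+1)}, \]
where $\lambda^{*(t)} = (\lambda_1^{(t)},\ldots,\lambda_n^{(t)})$, together with an LCP complementarity condition. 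The sunspot signal at stage~$t$ implements this distribution: with probability $\lambda_i^{(t)}$ only player~$i$ quits, and with probability $\lambda_0^{(t)}$ no one quits. The LCP complementarity translates directly into the per-stage incentive condition for each normal player: whenever $\lambda_i^{(t)}>0$, player~$i$ is indifferent between quitting (payoff~$r^i_i=0$) and continuing.

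The main obstacle is controlling the iteration. The sequence $v^{(t)}$ must remain in a compact region and the game must end with probability one, so that $r^\emptyset$ plays no role; I would handle this via a compactness argument combined with a careful selection from the LCP solution correspondence, possibly driving the state to a terminal configuration in finitely many stages. Deviations involving joint quits with payoffs $r^{\{i,j\}}$ must not be profitable; keeping per-stage quit probabilities of order~$\ep$ makes such deviations contribute at most $O(\ep)$ to any player's payoff. Abnormal players' incentives are automatic from~(\ref{equ:abnormal1}). A fixed-point or compactness argument then pastes these ingredients together.
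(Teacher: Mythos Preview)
Your proposal has genuine gaps in both parts.

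\textbf{Part 1.} There is no general Farkas-type alternative for linear complementarity problems: LCPs are not linear feasibility systems, and infeasibility of $\lcp(\widehat R,q^*)$ does not produce a nonnegative certificate vector $y$ with the properties you need to build a stationary profile. Moreover, the vector $q^*$ witnessing non-$Q$-ness is arbitrary in $\dR^n$; there is no reason it should coincide with, or be related to, any abnormal player's payoff vector. The paper's route is entirely different and indirect: extend $\widehat q$ to $q\in\dR^N$ by setting abnormal coordinates to~$1$, and use $q$ as the continuation payoff $r^\emptyset$ in an \emph{auxiliary} quitting game. Its $\lambda$-discounted game has a stationary equilibrium $x^\lambda$; the key step is that if the limit $x^0$ were nonabsorbing, then normalizing the quit rates $x_i^\lambda$ by $\lambda+\sum_j x_j^\lambda$ and passing to the limit would manufacture a solution of $\lcp(\widehat R,\widehat q)$, contradicting the choice of $\widehat q$. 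Hence $x^0$ is absorbing, and a stationary $\ep$-equilibrium of the \emph{original} game is then read off by standard limit arguments (one or two quitters in the limit).

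\textbf{Part 2.} The iterative picture is in the right spirit, but you have not confronted the two actual obstacles. First, the LCP map points backward in time: given a continuation vector $y$, the $Q$-matrix property furnishes a vector $w=w(y)$ that is the equilibrium payoff \emph{before} play continues to~$y$; your recursion $v^{(t)}=\widehat R\lambda^{*(t)}+\lambda_0^{(t)}v^{(t+1)}$ determines $v^{(t)}$ from $v^{(t+1)}$, not the other way around, so one cannot simply iterate forward from an initial state. Second, and more seriously, there is no uniform positive lower bound on the per-step termination mass $\sum_{i\geq 1} z_i(y)$, so even a backward iteration $y^{k+1}=w(y^k)$ need not accumulate total termination probability close to~$1$. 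The paper handles this with a separate approximation theorem: iterate $y\mapsto w(y)$ transfinitely (the hypothesis that $\lcp(\widehat R,\vec 0)$ has no nontrivial solution is exactly what guarantees $w$ has no fixed point, so the iteration never stalls), then extract a finite subsequence along which the accumulated movement $\sum_k\|y^k-w(y^k)\|_\infty$ is large while the accumulated mismatch $\sum_k\|y^{k+1}-w(y^k)\|_\infty$ is small. Your appeal to compactness and to ``possibly driving the state to a terminal configuration in finitely many stages'' does not supply this ingredient.
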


\subsection{An Example}

To illustrate the solution concept and our approach, in this section we provide the construction of a sunspot $\ep$-equilibrium
that uses only unilateral quittings in a specific game.
We will provide two constructions;
the first will be used in Section~\ref{section:characterization}
to characterize the set of sunspot equilibrium payoffs in a certain class of quitting games.
Unfortunately we could not generalize it to all quitting games.
The second construction will serve us in the proof of the general case.

Consider a quitting game with four players,
where the vectors $(r^i)_{i=1}^4$ are given by%
\footnote{In this example only we deviate from the assumption that payoffs are bounded by 1.}
\[ r^1 = (0,4,-1,-1), \ \ r^2 = (4,0,-1,-1), \ \ r^3 = (-1,-1,0,4), \ \ r^4 = (-1,-1,4,0). \]
The rest of the payoff function, namely, the vectors $r^\emptyset$ and $(r^S)_{|S| \geq 2}$, will not affect the analysis hence is omitted.
We note that all players are normal,
and that these payoffs are essentially the same payoffs that where used by Solan and Vieille (2002)
to construct a quitting game in which there is neither a stationary $\ep$-equilibrium nor
an $\ep$-equilibrium in which at every stage at most one player quits with positive probability.

\subsubsection{First Construction}
\label{section:example}

Observe that
\begin{eqnarray}
\label{equ:71}
(1,1,0,0) &=& \tfrac{1}{2}(2,0,0,0) + \tfrac{1}{2}(0,2,0,0)\\
&=& \tfrac{1}{2} \left( \tfrac{1}{2}(4,0,-1,-1) + \tfrac{1}{2}(0,0,1,1) \right)
+ \tfrac{1}{2} \left( \tfrac{1}{2}(0,4,-1,-1) + \tfrac{1}{2}(0,0,1,1)) \right),
\nonumber
\end{eqnarray}
and similarly
\begin{eqnarray}
\label{equ:72}
(0,0,1,1) &=& \tfrac{1}{2}(0,0,2,0) + \tfrac{1}{2}(0,0,0,2)\\
&=& \tfrac{1}{2} \left( \tfrac{1}{2}(-1,-1,4,0) + \tfrac{1}{2}(1,1,0,0) \right)
+ \tfrac{1}{2} \left( \tfrac{1}{2}(-1,-1,0,4) + \tfrac{1}{2}(1,1,0,0)) \right).
\nonumber
\end{eqnarray}
Fix $\ep > 0$ such that $\tfrac{1}{\ep}$ is an integer.
The following construction, in which the players implement the payoff vector $(1,1,0,0)$ as a sunspot equilibrium payoff, suggests itself:
\begin{itemize}
\item
Nature chooses whether the players implement the vector $(2,0,0,0)$ (if the current signal is smaller than $\tfrac{1}{2}$)
or the vector $(0,2,0,0)$ (if the current signal is at least $\tfrac{1}{2}$).
\item
If Nature chose to implement the vector $(2,0,0,0)$,
in each one of the next $\tfrac{1}{\ep}$ stages Player~2 quits with probability $\lambda$,
where $(1-\lambda)^{1/\ep} = \tfrac{1}{2}$.
That is, in each of these stages Player~2 quits with a small probability, and during these stages the total probability that he quits is $\tfrac{1}{2}$.
\item
If Nature chose to implement the vector $(0,2,0,0)$,
in each one of the next $\tfrac{1}{\ep}$ stages Player~1 quits with probability $\lambda$,
where $(1-\lambda)^{1/\ep} = \tfrac{1}{2}$.
\item
At the end of the $\tfrac{1}{\ep}$ stages, if no player quits,
the players turn to implement the vector $(0,0,1,1)$ in an analogous way.
\end{itemize}
We denote by $\xi^*$ the strategy profile that was just defined.
Under $\xi^*$ the game terminates with probability 1.
Moreover, even if one of the players deviates, the game terminates with probability 1.
Eqs.~\eqref{equ:71} and~\eqref{equ:72} imply that $\gamma(\xi^*) = (1,1,0,0)$,
and, more generally, that when the players attempt to implement a certain vector, say $(2,0,0,0)$, their expected payoff is that vector.

We now argue that no player can profit much by deviating from $\xi^*$.
To this end we note that the expected continuation payoff of all players after every history is nonnegative.
In each stage in which a player is supposed to quit with positive probability, his continuation payoff is 0,
hence at such stages the player is indifferent between continuing and quitting.

A player who is supposed to quit with a positive probability at a given stage,
does so with probability $\lambda$, which is small.
Consequently, since payoffs are bounded by 4 and by
Assumption~\ref{assumption:1}, if a player deviates and quits at a stage in which he is supposed to continue
his payoff is at most $4\lambda$.
Since the continuation payoff of all players after every history is nonnegative,
this implies that no player who is supposed to continue at a given stage can profit more than $4\lambda$ by quitting at that stage.

As we will see in Section~\ref{section:characterization},
this construction can be generalized to the case in which the matrix $\widehat R$ contains exactly one positive entry in each row and each column.

\subsubsection{Second Construction}

Fix $\ep \in (0,1)$.
We will use the following identities:%
\footnote{The verification of the calculations in this construction can be simplified by observing that the total sum of the coordinates
of all vectors in the construction is 2.}
\begin{equation}
\label{equ:73}
(0,0,2,0) = \tfrac{\ep}{6+\ep} (0,0,0,2) + \tfrac{6}{6+\ep}(0,0,\tfrac{6+\ep}{3},-\tfrac{\ep}{3}),
\end{equation}
and
\begin{eqnarray}
\nonumber
(0,0,\tfrac{6+\ep}{3},-\tfrac{\ep}{3})
&=& \tfrac{1}{6}(0,4\ep,2-3\ep,-\ep) +  \tfrac{1}{6}(4\ep,0,2-3\ep,-\ep) + \tfrac{4}{6}(-\ep,-\ep,2+2\ep,0)\\
\label{equ:74}
&=& \tfrac{1}{6} \bigl((1-\ep) (0,0,2,0) + \ep(0,4,-1,-1)\bigr)\\
\nonumber
&&+ \tfrac{1}{6} \bigl((1-\ep) (0,0,2,0) + \ep(4,0,-1,-1)\bigr)\\
&&+ \tfrac{4}{6} \bigl((1-\ep) (0,0,2,0) + \ep(-1,-1,4,0)\bigr).
\nonumber
\end{eqnarray}
These equalities suggest the following construction of a sunspot $5\ep$-equilibrium $\xi^*$ with $\gamma(\xi^*) = (0,0,2,0)$.
\begin{itemize}
\item   Nature chooses whether to implement
the vector $(0,0,0,2)$ (with probability $\tfrac{\ep}{6+\ep}$),
the vector $(0,4\ep,2-3\ep,-\ep)$ (with probability $\tfrac{1}{6+\ep}$),
the vector $(4\ep,0,2-3\ep,-\ep)$ (with probability $\tfrac{1}{6+\ep}$),
or the vector $(-\ep,-\ep,2+2\ep,0)$ (with probability $\tfrac{4}{6+\ep}$).
\item   If Nature chose to implement the vector $(0,0,0,2)$, then the players repeat the analogous construction with the appropriate amendments.
\item
If Nature chose to implement the vector $(0,4\ep,2-3\ep,-\ep)$,
then Player~1 quits with probability $\ep$ and continues with probability $1-\ep$.
\item
If Nature chose to implement the vector $(4\ep,0,2-3\ep,-\ep)$,
then Player~2 quits with probability $\ep$ and continues with probability $1-\ep$.
\item
If Nature chose to implement the vector $(-\ep,-\ep,2+2\ep,0)$,
then Player~4 quits with probability $\ep$ and continues with probability $1-\ep$.
\item   If no player quit, then the players implement the payoff $(0,0,2,0)$ as indicated above.
\end{itemize}
As in the first construction, under the strategy profile $\xi^*$
the game terminates with probability 1,
hence by Eqs.~\eqref{equ:73} and~\eqref{equ:74} the expected payoff under $\xi^*$ after every finite history
is the payoff vector that the players implement beginning at that stage.

We now argue that no player can profit more than $5\ep$ by deviating from $\xi^*$.
Note that the play terminates with probability 1 even if a single player deviates.
Consequently, when a player is supposed to quit with positive probability,
he is indifferent between continuing and quitting.
Moreover, if some player, say player~$i$, is supposed to continue after some finite history,
then his continuation payoff is at least $-\ep$,
while, since the player who quits with positive probability does so with probability $\ep$,
by quitting player~$i$ will obtain at most $4\ep$.
Thus no player can profit more than $5\ep$ by deviating from $\xi^*$.

\section{Proof of Theorem~\ref{theorem:1}}
\label{sec:proof}

In this section we prove Theorem~\ref{theorem:1}.
We start by describing the discounted game,
which will be used in the proof of the first claim of the theorem.
This claim will be proven in Section~\ref{section:proof:1} and the second claim will be proven in Section~\ref{section:proof:2}.

\subsection{The Discounted Game}

In this section we consider the discounted game, in which the vector $r^\emptyset$ represents the stage payoff until the game terminates.
Formally, given a discount factor $\lambda \in [0,1)$,
the \emph{$\lambda$-discounted game} $\Gamma_\lambda((r^S)_{S \subseteq I})$ is the strategic-form game
$(I,(X_i)_{i \in I}, \gamma^\lambda)$,
where the set of players coincides with the set of players in the original quitting game $\Gamma((r^S)_{S \subseteq I})$,
the set of strategies of each player $i \in I$ is $X_i$, his set of behavior strategies in the original quitting game,
and the payoff function is given by
\begin{eqnarray*}
\gamma^\lambda(x) &:=&
\E_x\left[ (1-\lambda) \sum_{t=1}^\infty \lambda^{t-1} \left( \mathbf{1}_{\{t < t_*\}} r^\emptyset + \mathbf{1}_{\{t \geq t_*\}} r^{S_*}\right)\right]\\
&=& \E_x[(1-\lambda^{t_*-1}) r^\emptyset + \lambda^{t_*-1} r^{S_*}].
\end{eqnarray*}
When $x$ is a stationary strategy profile, we have
\begin{equation}
\label{equ:discounted}
\gamma^\lambda(x) =
\frac{\lambda \prod_{i \in I} (1-x_i) r^\emptyset + \sum_{\emptyset \neq S \subseteq I} \left(\prod_{i \in S} x_i \prod_{i \not\in S} (1-x_i) r^S\right)}
{\lambda \prod_{i \in I} (1-x_i) + \sum_{\emptyset \neq S \subseteq I} \left(\prod_{i \in S} x_i \prod_{i \not\in S} (1-x_i)\right)}.
\end{equation}

A strategy profile $x \in X$ is a \emph{$\lambda$-discounted equilibrium} if for every player $i \in I$ and every strategy $x'_i \in X_i$ of player~$i$ we have
$\gamma_i^\lambda(x) \geq \gamma_i^\lambda(x'_i,x_{-i})$.


By Fink (1964) or Takahashi (1964) the $\lambda$-discounted game admits a $\lambda$-discounted equilibrium in stationary strategies.
By Bewley and Kohlberg (1976) one can choose a semi-algebraic function $\lambda \mapsto x^\lambda$
that assigns a stationary discounted equilibrium to each discount factor.
In particular, we can assume w.l.o.g.~that the limit $x^0 := \lim_{\lambda \to 0} x^\lambda$ exists.
Moreover, we can assume that either $x^\lambda_i = 0$ for every $\lambda$ sufficiently close to 0
or $x^\lambda_i > 0$ for every $\lambda$ sufficiently close to 0.

\subsection{Stationary $\ep$-Equilibria}
\label{section:proof:1}

In this section we prove the first statement of Theorem~\ref{theorem:1}.
Let $x$ be a stationary strategy profile.
If $\sum_{i \in I} x_i = 0$, then $x = \vec 0$, and the game continue forever.
If $\sum_{i \in I} x_i > 0$ then at least one player quits at every period with positive probability,
and the game terminates a.s.
It is well known (see, e.g., Vrieze and Thuijsman (1989) or Solan (1999))
that if $(x^\lambda)_{\lambda > 0}$ is a sequence of stationary strategy profiles such that
$x^0 := \lim_{\lambda \to 0} x^\lambda$ exists and if $\sum_{i\in I} x^0_i > 0$, then
\begin{equation}
\label{equ:limit}
\lim_{\lambda \to 0} \gamma^\lambda(x^\lambda) = \gamma(x^0) = \widehat\gamma(x^0) = \lim_{\lambda \to 0} \widehat\gamma^\lambda(x^\lambda).
\end{equation}

Suppose now that the matrix $\widehat R$ is not a $Q$-matrix.
Then there is a vector $\widehat q \in \dR^{n}$ such that the linear complementarity problem $\lcp(\widehat R,\widehat q)$ does not have any solution.
In particular, $\widehat q \not\in \dR^{n}_{\geq 0}$.
Extend $\widehat q$ to a vector in $\dR^N$ by setting all coordinates that are not%
\footnote{The new coordinates can be set to any positive number, and not necessarily to 1.}
in $[n]$ to 1, and denote by $q$ the resulting vector.

Consider the auxiliary quitting game $\Gamma((r^S)_{\emptyset \neq S \subseteq I},q)$,
where $q$ is the payoff if no player ever quits,
and the $\lambda$-discounted version of this game.
To distinguish the payoff in the original game from the payoff in the auxiliary game, we denote the former by $\gamma(x)$ and $\gamma^\lambda(x)$,
and the latter by $\widehat \gamma(x)$ and $\widehat \gamma^\lambda(x)$.
For every discount factor $\lambda \in (0,1]$ let $x^\lambda$ be a stationary equilibrium of the auxiliary quitting game
and denote $x^0 := \lim_{\lambda \to 0} x^\lambda$.

\bigskip
\noindent\textbf{Case 1:} $x^0$ is absorbing and under $x^0$ at least two players quits with positive probability.

We will show that in this case $x^0$ is a stationary 0-equilibrium.
Since under $x^0$ at least two players quit with positive probability,
the play eventually terminates even if one player deviates.
By Eq.~\eqref{equ:limit}, since $x^\lambda$ is a $\lambda$-discounted equilibrium of the auxiliary game $\Gamma_\lambda((r^S)_{\emptyset \neq S \subseteq I},q)$,
and by Eq.~(\ref{equ:limit}) once again,
we deduce that for every player $i \in I$ we have
\begin{equation}
\label{equ:limit1}
\gamma_i(x^0) = \widehat \gamma_i(x^0) = \lim_{\lambda \to 0} \widehat \gamma_i^\lambda(x^\lambda) \geq
\lim_{\lambda \to 0} \widehat \gamma^\lambda_{i}(Q_{i},x^\lambda_{-i})
= \widehat \gamma_i(Q_i,x^0_{-i}) = \gamma_i(Q_i,x^0_{-i})
\end{equation}
and
\begin{equation}
\label{equ:limit2}
\gamma_i(x^0) = \widehat \gamma_i(x^0) = \lim_{\lambda \to 0} \widehat \gamma_i^\lambda(x^\lambda) \geq
\lim_{\lambda \to 0} \widehat \gamma^\lambda_{i}(C_{i},x^\lambda_{-i})
= \widehat \gamma_i(C_i,x^0_{-i}) = \gamma_i(C_i,x^0_{-i}).
\end{equation}
It follows that no player can profit by deviating in the original quitting game.

\bigskip
\noindent\textbf{Case 2:} $x^0$ is absorbing and under $x^0$ there is a single player who quits with positive probability.

We will prove that for every $\ep > 0$,
by supplementing $x^0$ with a threat strategy against a deviation of the unique player who quits with positive probability under $x^0$,
we can construct a stationary $\ep$-equilibrium.

We argue that $x^\lambda_k = 0$ for every abnormal player~$k$ and every $\lambda$ sufficiently close to 0.
Indeed, let $k \not\in I_*$ be an abnormal player satisfying $x^\lambda_k > 0$ for every $\lambda$ sufficiently small.
Since $x^\lambda$ is an equilibrium of the $\lambda$-discounted game $\Gamma_\lambda((r^S)_{\emptyset \neq S \subseteq I},q)$
and by Eq.~\eqref{equ:limit},
\begin{equation}
\label{equ:201}
\lim_{\lambda \to 0} \widehat \gamma^\lambda_{k}(C_{k},x^\lambda_{-k}) \leq \lim_{\lambda \to 0} \widehat \gamma^\lambda_{k}(Q_{k},x^\lambda_{-k})
= \widehat \gamma_k(Q_k,x^0_{-k}) = 0.
\end{equation}
Under the strategy profile $(C_k,x^0_{-k})$ at most one player quits with positive probability,
hence the quantity $\lim_{\lambda \to 0} \widehat \gamma^\lambda_{k}(C_{k},x^\lambda_{-k})$
is a convex combination of $q_k$ and $(r^j_k)_{j \neq k}$.
Indeed, in Eq.~\eqref{equ:discounted}, when substituting $x$ by $x^\lambda$ and taking the limit as $\lambda$ goes to 0,
all summands that correspond to sets of players containing at least two players vanish.
Since player~$k$ is abnormal, we have $q_k =1$,
hence there is a player $j$ with $r^j_k \leq 0$.
This implies that $k \in I_1$, and inductively it implies that $k \in I_l$ for every $l \in \dN$, contradicting the assumption that $k$ is an abnormal player.

Denote by $i$ the unique player who quits with positive probability under $x^0$.
The discussion in the previous paragraph implies that player~$i$ is normal.
A possible deviation of player $j \neq i$ is to quit at some stage.
As in Eq.~(\ref{equ:limit1}), such a deviation is not profitable for player $j$.

A possible deviation of player~$i$ is to continue forever.
In this case his payoff will be $q_i$ rather than 0, so this deviation is possibly profitable.
Since $i$ is a normal player, there is a player $j \in I_*$, distinct from $i$,
such that $r^j_i \leq 0$.
It follows that the following strategy profile is a stationary $4\ep$-equilibrium:
at every stage,
player~$i$ quits with probability $\ep$,
player~$j$ quits with probability $\ep^2$,
and all other players continue.

\bigskip
\noindent\textbf{Case 3:} $x^0$ is nonabsorbing.

Since $x^\lambda$ is a $\lambda$-discounted equilibrium and by Eq.~(\ref{equ:limit}), we have
\begin{equation}
\label{equ:3:1}
\lim_{\lambda \to 0} \widehat \gamma_i^\lambda(x^\lambda) \geq \lim_{\lambda \to 0} \widehat \gamma^\lambda_{i}(Q_{i},x^\lambda_{-i})
= \widehat \gamma_i(Q_i,C_{-i}) = 0.
\end{equation}
As in Case~2, if $x^\lambda_i > 0$ for every $\lambda$ sufficiently small, then $i$ is a normal player.
Denote
\[ z_i^\lambda := \frac{x_i^\lambda}{\lambda+\sum_{j \in I_*} x_j^\lambda}, \ \ \ i \in I, \]
and
\[ z^\lambda_0 := \frac{\lambda}{\lambda+\sum_{j \in I_*} x_j^\lambda}. \]
Since $x^0 = \vec 0$ and since $x^\lambda_i = 0$ for every $\lambda$ sufficiently small and every abnormal player $i \not\in I_*$, we have by Eq.~\eqref{equ:limit}
\[ w := \lim_{\lambda \to 0} \widehat    \gamma^\lambda(x^\lambda) = \lim_{\lambda \to 0} \left( z^\lambda_0 q + \sum_{i \in I_*} z^\lambda_i r^i \right). \]
Set $z^0 := \lim_{\lambda \to 0} z^\lambda$.

Let $\widehat w$ and $\widehat z$ be the restriction of $w$ and $z^0$ to the first $n$ coordinates.
Note that $\widehat z$ is a probability distribution over $\{0,1,\cdots,n\}$.
We verify that $(\widehat w, \widehat z)$ is a solution of the linear complementarity problem $\lcp(\widehat R,\widehat q)$,
contradicting the assumption that this problem has no solution.
Indeed, by definition
\[ \widehat w = \widehat z_0 q + \sum_{i=1}^n \widehat z_i r^i. \]
Eq.~(\ref{equ:3:1}) implies that $\widehat w \in \dR^{n}_{\geq 0}$.
If $\widehat z_i > 0$ then $z_i^0 > 0$, and therefore $z^\lambda_i > 0$ for every $\lambda$ sufficiently close to 0,
hence $x^\lambda_i > 0$ for every $\lambda$ sufficiently close to 0.
This implies that player~$i$ is indifferent between continuing and quitting, so that by Eq.~(\ref{equ:limit}),
\begin{equation*}
\widehat w_i = \lim_{\lambda \to 0} \widehat \gamma_i^\lambda(x^\lambda) =
\lim_{\lambda \to 0} \widehat \gamma_{i}(Q_{i},x^\lambda_{-i}) = \widehat \gamma_{i}(Q_{i},C_{-i}) = 0.
\end{equation*}
The claim follows.

\subsection{Sunspot Equilibria In Which At Most One Player Quits At Every Stage}
\label{section:proof:2}

In this section we prove the second statement of Theorem~\ref{theorem:1}.

\subsubsection{The Set of Possible Sunspot Equilibrium Payoffs}

Our goal is to construct a sunspot $\ep$-equilibrium in which only normal players quit, at every stage at most one player quits,
and he does so with a low probability.
This has two consequences.
First, the equilibrium payoff will be in $\conv(r^1,\cdots,r^n)$.
Second, since $r^i_i = 0$ for every player $i \in I$, a player who deviates and quits when he should not, receives an amount close to 0.
Hence the equilibrium payoff will be close to the nonnegative orthant.
By Eq.~\eqref{equ:abnormal1} all abnormal players receive a nonnegative payoff when normal payoffs quit,
and since $r^i_i = 0$ for every player $i \in [N]$, the abnormal players are content having only normal players quit.
Since such a sunspot $\ep$-equilibrium depends only on the projection of the vectors $(r^i)_{i \in I}$ to normal players,
we will consider in this section the $n \times n$ matrix $\widehat R$ whose $i$'th column coincides with the vector $\widehat r^i$.
The set of all vectors that may be sunspot equilibrium payoff when only normal players quit, at each stage at most one player quits,
and he does so with small probability, is, then,
\[ D := \conv( \widehat r^1,\cdots,\widehat r^n) \cap \dR^n_{\geq 0}. \]

The following observation states that whenever $\widehat q$ is in the convex hull of $\{\widehat r^1,\cdots,\widehat r^n\}$,
then any vector $w$ that is part of a solution of the linear complementarity problem $\lcp(\widehat R,q)$ lies on the boundary of $D$.
\begin{lemma}
\label{lemma:boundary}
If $\widehat q \in \conv(\widehat r^1,\cdots,\widehat r^n)$ then every solution $(w,z)$ of $\lcp(\widehat R,\widehat q)$ satisfies $w\in \partial D$.
\end{lemma}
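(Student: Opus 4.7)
My approach has two main steps. First, I would establish $w \in D$: since $\widehat q \in \conv(\widehat r^1,\ldots,\widehat r^n)$, write $\widehat q = \sum_{i=1}^n \alpha_i \widehat r^i$ for some $(\alpha_i)_{i=1}^n \in \Delta([n])$. Substituting into the LCP defining equation gives
\[
w \;=\; z_0 \widehat q + \sum_{i=1}^n z_i \widehat r^i \;=\; \sum_{i=1}^n (z_0 \alpha_i + z_i)\,\widehat r^i,
\]
a combination whose coefficients are nonnegative and sum to $z_0 \sum_i \alpha_i + \sum_i z_i = z_0 + (1-z_0) = 1$. Hence $w \in \conv(\widehat r^1,\ldots,\widehat r^n)$, and together with $w \in \dR^n_{\geq 0}$ (which is part of the definition of a solution) we obtain $w \in D$.

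Second, I would leverage the complementarity condition $z_i w_i = 0$ for $i \in [n]$ to promote $w \in D$ to $w \in \partial D$. If some $z_i > 0$ with $i \in [n]$, then $w_i = 0$, so $w$ lies on the coordinate hyperplane $\{x \colon x_i = 0\}$; this puts $w$ on the relative boundary of $D$, since every neighborhood of $w$ inside $\conv(\widehat r^1,\ldots,\widehat r^n)$ meets the half-space $\{x_i < 0\}$, which is disjoint from $\dR^n_{\geq 0}$. The remaining possibility is the degenerate solution $z = (1,0,\ldots,0)$, for which $w = \widehat q$; here I would fall back on the observation that $\conv(\widehat r^1,\ldots,\widehat r^n)$, being the convex hull of $n$ points in $\dR^n$, is contained in an affine subspace of dimension at most $n-1$. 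Consequently $D$ has empty topological interior in $\dR^n$, and being compact (intersection of a compact convex set with a closed cone), $D$ is closed, so $\partial D = D$ and in particular $w \in \partial D$.

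The main obstacle I anticipate is precisely the degenerate case $z_0 = 1$: complementarity is vacuous there, so one cannot in general exhibit a zero coordinate of $w = \widehat q$. Bridging this gap requires noticing that, because $\conv(\widehat r^1,\ldots,\widehat r^n)$ is a simplex of codimension at least one in $\dR^n$, the set $D$ coincides with its own boundary in $\dR^n$, so the claim $w \in \partial D$ collapses to the already-established $w \in D$. The nondegenerate case is the substantive one and is driven entirely by complementarity.
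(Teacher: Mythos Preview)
Your proof is correct and follows the same approach as the paper: establish $w \in D$ from the convex-combination identity, then invoke complementarity in the nondegenerate case $z_0 < 1$ to exhibit a vanishing coordinate of $w$. Your dimension argument for the degenerate case $z_0 = 1$ (that $D$, lying in an affine subspace of dimension at most $n-1$, has empty interior in $\dR^n$, hence $\partial D = D$) is precisely what the paper's one-word ``trivially'' leaves implicit---and, as you note, it in fact renders the complementarity step superfluous once $w\in D$ is known.
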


\begin{proof}
Fix a solution $(w,z)$ of $\lcp(\widehat R,\widehat q)$.
If the solution $(w,z)$ satisfies $z_0=1$, then $w=\widehat q$ and the result holds trivially.

Suppose then that $z_0 < 1$.
Since $\widehat q \in \conv(\widehat r^1,\cdots,\widehat r^n)$ we have $w \in \conv(\widehat r^1,\cdots,\widehat r^n)$.
Since $w \in \dR^n_{\geq 0}$ it follows that $w \in D$.
Since $z_0 < 1$ there is a player $i \in I$ such that $z_i > 0$, hence by the complementarity condition $w_i = 0$,
and therefore $w \in \partial D$.
\end{proof}

\subsubsection{The Basic Building Block}
\label{sec:theorem}

As mentioned before,
when $y \in \dR^n_{\geq 0}$ one solution $(w,z)$ of the problem $\lcp(R,y)$ is the trivial solution in which $w=y$ and $z=(1,0,0,\cdots,0)$.
The following theorem asserts that for every $y \in \partial D$ a nontrivial solution to a certain system that is related to problem~(\ref{lpc}) exists.
This theorem is the basic building block of our construction of a sunspot $\ep$-equilibrium.

\begin{theorem}
\label{theorem:2}
Under the assumptions of Theorem~\ref{theorem:1}(2),
for every $y \in \partial D$ and every $\ep > 0$ there are $w \in \partial D$, $w^1,\cdots,w^n\in \dR^n$, and $z \in \Delta(\{0,1,2,\ldots,n\})$
that satisfy the following conditions:
\begin{itemize}
\item[(F.1)]    $w^i \in \conv(w,\widehat r^i)\setminus \{w\}$ for every $i \in [n]$.
\item[(F.2)]    $w^i_j \geq -\ep$ for every $i,j \in [n]$.
\item[(F.3)]   $w = z_0y + \sum_{i=1}^n z_i w^i$.
\item[(F.4)]   If $i \in [n]$ and $z_i > 0$, then $w^i_i = 0$.
\item[(F.5)]   $\sum_{i=1}^n z_i > 0$.
\end{itemize}
\end{theorem}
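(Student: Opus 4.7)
The plan is to reduce the theorem to finding a ``nontrivial'' solution $(w,\alpha)$ of the linear complementarity problem $\lcp(\widehat R, y)$, meaning one with $\alpha_0<1$, and then to construct such a solution by perturbing $y$, invoking the $Q$-matrix hypothesis to solve the perturbed LCPs, and passing to a limit while using the ``no nontrivial solution of $\lcp(\widehat R,\vec 0)$'' assumption to rule out degenerate limits.

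For the reduction, suppose $(w,\alpha)$ is an LCP solution of $\lcp(\widehat R, y)$ with $\alpha_0<1$: $\alpha\in\Delta(\{0,\ldots,n\})$, $w = \alpha_0 y + \sum_{i=1}^n\alpha_i \widehat r^i \in \dR^n_{\geq 0}$, and $\alpha_i w_i = 0$ for every $i\in[n]$. Since $y\in\partial D\subseteq\conv(\widehat r^1,\ldots,\widehat r^n)$, Lemma~\ref{lemma:boundary} gives $w\in\partial D$. For each $i$ pick $\lambda_i\in(0,1]$ small enough that $w^i:=(1-\lambda_i)w+\lambda_i \widehat r^i$ satisfies $w^i_j\geq-\ep$ for every $j\in[n]$; this is possible because $w\geq 0$ and the $\widehat r^i$ are bounded. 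Setting $\mu:=\bigl(\alpha_0+\sum_{i=1}^n\alpha_i/\lambda_i\bigr)^{-1}$, $z_0:=\alpha_0\mu$, and $z_i:=\alpha_i\mu/\lambda_i$, a direct algebraic check yields $z\in\Delta(\{0,\ldots,n\})$ and $w = z_0 y + \sum_{i=1}^n z_i w^i$, giving (F.3). The complementarity $\alpha_i w_i=0$ combined with $\widehat r^i_i=0$ forces $w^i_i=(1-\lambda_i)w_i=0$ whenever $z_i>0$, which gives (F.4); (F.5) follows from $\alpha_0<1$; and (F.1), (F.2) are built into the definition of $w^i$.

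To produce such $(w,\alpha)$, the plan is to perturb $y$ to $q_\delta:=y-\delta v$ for a vector $v\in\dR^n$ chosen so that $q_\delta\notin\dR^n_{\geq 0}$ for all small $\delta>0$; for instance $v=\vec 1$, exploiting a zero coordinate of $y\in\partial D$ (the generic case on the ``orthant face'' of $\partial D$), or a more delicate choice when $y$ lies on the boundary of $\conv(\widehat r^i)$ without zero coordinates. The $Q$-matrix hypothesis provides a solution $(w_\delta,\alpha_\delta)$ of $\lcp(\widehat R, q_\delta)$, and the trivial solution $w=q_\delta$ is excluded by $q_\delta\not\geq 0$, so $\alpha_{\delta,0}<1$. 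A subsequence converges to a solution $(w,\alpha)$ of $\lcp(\widehat R, y)$; if $\alpha_0<1$, we are done. Otherwise $\alpha_{\delta,0}\to 1$, and we rescale $\tilde\alpha_{\delta,i}:=\alpha_{\delta,i}/(1-\alpha_{\delta,0})$ for $i\geq 1$, pass to a further limit $\tilde\alpha\in\Delta([n])$, and analyze the first-order asymptotics of $w_{\delta,i}\geq 0$ and $\alpha_{\delta,i}w_{\delta,i}=0$. The limiting complementarity forces $\tilde\alpha$ to be supported on $K:=\{i:y_i=0\}$; letting $\tau:=\lim\delta/(1-\alpha_{\delta,0})\in[0,\infty)$ along a further subsequence, one finds that $s:=\sum_{i\in K}\tilde\alpha_i\widehat r^i$ satisfies $s_k\geq\tau$ for every $k\in K$ with equality on the support of $\tilde\alpha$. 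In the favourable case $\tau=0$ and $s\geq 0$ globally, the pair $(s,\tilde\alpha)$, extended by $\alpha_0=0$, is a nontrivial solution of $\lcp(\widehat R,\vec 0)$, contradicting the hypothesis of Theorem~\ref{theorem:1}(2).

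The main obstacle is closing the remaining degenerate cases, namely $\tau>0$ or $\tau=0$ with some coordinate $s_k<0$ for $k\notin K$, as well as the geometric subcase in which $y$ has no zero coordinate. To handle these I would refine the choice of perturbation $v$ (for example combining subtraction on $K$ with addition on $K^c$ to control the signs of $s|_{K^c}$, or choosing $v$ along a supporting hyperplane at $y$ when $y$ lies in the interior of a face of $\conv(\widehat r^i)$), or iteratively apply the $Q$-matrix property to auxiliary right-hand sides that progressively absorb the excess $\tau\vec 1$ and the negative components of $s|_{K^c}$, until one again contradicts the no-nontrivial-solution hypothesis for $\lcp(\widehat R,\vec 0)$. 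Once the LCP solution at $y$ with $\alpha_0<1$ is in hand, the reduction above produces the desired $(w, w^i, z)$ satisfying (F.1)--(F.5).
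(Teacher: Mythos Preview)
Your reduction step is correct and is exactly what the paper does: once you have a solution $(w,\alpha)$ of $\lcp(\widehat R,y)$ with $\alpha_0<1$, setting $w^i=(1-\ep)w+\ep\widehat r^i$ and rescaling $\alpha$ to $z$ yields (F.1)--(F.5). The paper carries out precisely this computation at the end of its Lemma~\ref{lemma:23}.

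The gap is in producing the nontrivial LCP solution. Your perturbation $q_\delta=y-\delta\vec 1$ and the subsequent rescaling analysis do not close the case $\alpha_{\delta,0}\to 1$, and you say so yourself: the subcases $\tau>0$, or $\tau=0$ with $s_k<0$ for some $k\notin K$, as well as the case $J_y=\emptyset$, are left to unspecified ``refinements'' of $v$ or an unspecified iteration. These are the hard cases, and nothing in your outline forces a contradiction with the no-nontrivial-solution hypothesis for $\lcp(\widehat R,\vec 0)$ there. In particular, when the limit rescaled combination $s=\sum_{i\in K}\tilde\alpha_i\widehat r^i$ fails to be globally nonnegative, no amount of juggling with $\vec 1$-perturbations will manufacture the missing contradiction.

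The paper avoids this difficulty not by a sharper asymptotic analysis but by a geometric case split that you are missing. With $J_y=\{i:y_i=0\}$ and $S(J_y)=\conv\{\widehat r^i:i\in J_y\}$, the three cases are: (i) $y\in S(J_y)$, where $y$ itself is already a nontrivial LCP solution with $\alpha_0=0$; (ii) $\conv(S(J_y),y)\cap D=\{y\}$; and (iii) $\conv(S(J_y),y)\cap D\supsetneqq\{y\}$. In case (ii) the paper perturbs $y$ \emph{towards $S(J_y)$} (not along $-\vec 1$), solves the LCP there, and passes to the limit; the geometric hypothesis then rules out $w=y$ directly, because $w_\ep$ always lies in $\conv(S(J_{w_\ep}),y_\ep)\cap D$, and for $w_\ep$ close to $y$ one has $J_{w_\ep}\subseteq J_y$, forcing this intersection to be empty --- no asymptotic rescaling is needed. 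In case (iii) the paper does not perturb or take limits at all: it finds $w$ directly on a segment $\conv(\widehat r^i,y)$ for some $i\in J_y$, or on the boundary of the shrunken simplex $(1-\delta)y+\delta S(J_y)$. The choice of perturbation direction in (ii) and the explicit construction in (iii) are the substantive ideas you are lacking; your $-\vec 1$ direction discards the information encoded in $J_y$ that makes the argument go through.
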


Conditions~(F.1) and~(F.2) state that each $w^i$ is in the convex hull of $w$ and $\widehat r^i$, and each of its coordinates is at least $-\ep$.
Conditions~(F.3) and~(F.4) state that $(w,z)$ is a solution of the linear complementarity problem $\lcp(R,y)$,
when $(w^i)_{i=1}^n$ replace $(\widehat r^i)_{i=1}^n$,
and when the complementarity condition involves the vectors $(w^i)_{i=1}^n$ instead of the vector $w$.
Condition~(F.5) states that the solution is not trivial.

We note that the assumptions of Theorem~\ref{theorem:1}(2) and Condition~(F.1) imply that the unique
solution $\lambda_i \in (0,1]$ to the equation $w^i = \lambda_i \widehat r^i + (1-\lambda_i)w$ satisfies $\lambda_i < 1$,
provided $\ep$ is sufficiently small.
Indeed, if $\lambda_i = 1$ then $w = \widehat r^i$.
If $\ep$ is sufficiently small this implies that $\widehat r^i \in \dR^n_{\geq 0}$.
But then the linear complementarity problem $\lcp(\widehat R,\vec 0)$ has a nontrivial solution with $w=\widehat r^i$,
a contradiction to the assumptions of Theorem~\ref{theorem:1}(2).

\bigskip

\centerline{\includegraphics{figure.6}}

\centerline{Figure \arabic{figurecounter}: A graphic depiction of Theorem~\ref{theorem:2}.}

\bigskip

Figure~\arabic{figurecounter} provides a graphical interpretation to Theorem~\ref{theorem:2}.
Nature chooses an element $i \in \{0,1,\cdots,n\}$ according the distribution $z$.
If Nature chooses 0, the outcome is $y$.
If Nature chooses $i \in [n]$, then player~$i$ quits with probability $\lambda_i$.
If player~$i$ quits, the outcome is $\widehat r^i$, and otherwise it is $w$.

Condition~(F.4) asserts that every player who may be chosen is indifferent between quitting and continuing,
Condition~(F.1) asserts that the expected outcome if player~$i$ is chosen is $w^i$.
This condition moreover implies that $\lambda_i > 0$ for every $i \in [n]$,
so that every player who is chosen, quits with positive probability.
Condition~(F.3) asserts that the expected outcome at the root is $w$.
By Condition~(F.5) we have $\sum_{i=1}^n z_i > 0$, hence some player $i$ quits with positive probability.

Figure~\arabic{figurecounter} can describe the behavior of the players in a single stage of the quitting game:
Nature's signal chooses an element of $\{0,1,\cdots,n\}$ according to the distribution $z$.
If the choice is 0, no player quits;
if the choice is $i$, player~$i$ quits with probability $\lambda_i$,
while all other players continue.
We will use a proper concatenation of this behavior to construct a sunspot $\ep$-equilibrium in the quitting game $\Gamma((r^S)_{S \subseteq I})$.

In the above interpretation, if player~$i$ is chosen by nature, he quits with probability $\lambda_i$.
In quitting games players can quit simultaneously, and thus, if player~$j$ quits when player~$i$ is chosen,
the expected outcome will be $\lambda_i \widehat r^{\{i,j\}} + (1-\lambda_i) \widehat r^{j}$.
Since player~$j$'s payoff in this case, $\lambda_i \widehat r^{\{i,j\}}_j + (1-\lambda_i) \widehat r^{j}_j = \lambda_i \widehat r^{\{i,j\}}_j$,
may be higher than $w^i_j$, which is his expected outcome given that player~$i$ is chosen, player~$j$ may find it beneficial to quit when player~$i$ is chosen.
As in the example in Section~\ref{section:example},
to ensure that this type of deviation is not profitable, when player~$i$ is chosen, he will not quit in a single stage of the quitting game,
but rather along a block of $K$ stages, where $K$ is sufficiently large;
that is, in each stage of the block, he will quit with probability $1-(1-\lambda_i)^{1/K}$.
The expected continuation payoff along the block will thus be in the convex hull of $w^i$ and $w$.
Since $w \in \partial D\subset \dR^n_{\geq 0}$ and $w^i_j \geq -\ep$ for every $j \in [n]$,
this implies that the expected continuation payoff for all players along the block is at least $-\ep$,
so that a player who is supposed to continue throughout the block
cannot profit much by deviating and quitting.

Since $w$ is both an outcome of the tree that appears in Figure~\arabic{figurecounter}
and the expected outcome of this interaction,
\addtocounter{figurecounter}{1}
we can create a repeated version of this game, in which, if one of the players is chosen and this player does not quit,
then another copy of the game is played, see Figure~\arabic{figurecounter}.
Since $z_0 + \sum_{i=1}^n \lambda_iz_i > 0$,
the length of a play in the tree that appears in Figure~\arabic{figurecounter} is distributed according to a geometric distribution.
We call this auxiliary game $G(y)$.
Note that the possible outcomes of $G(y)$ are $\widehat r^1,\widehat r^2,\cdots,\widehat r^n,y$,
and the payoff under the behavior described above is $w$.

Denote by $w(y)$ the vector $w$ that corresponds to $y \in \partial D$ in Theorem~\ref{theorem:2}.
The natural approach to construct a sunspot $\ep$-equilibrium in the original quitting game
would be to find a sequence $(y^k)_{k \in \dN}$ such that $y^{k+1} = w(y^k)$,
and to concatenate the games that appear in Figure~\arabic{figurecounter} one after the other.
This is the approach that we take, though it requires some significant amendments.

\bigskip

\centerline{\includegraphics{figure.7}}

\centerline{Figure \arabic{figurecounter}: The auxiliary game $G(y)$ with geometric length.}
\addtocounter{figurecounter}{1}

\bigskip

By Theorem~\ref{theorem:2} we can choose for every $y \in \partial D$ a point $w(y) \in \partial D$, points $(w^i(y))_{i \in J(y)} \subset \dR^n$,
and a probability distribution $z(y) \in \Delta(\{0,1,\cdots,n\})$ that satisfy Conditions (F.1)--(F.5).
As described in Section~\ref{sec:theorem},
these quantities reflect an $\ep$-equilibrium behavior in an auxiliary quitting game with geometric length:
$w(y)$ is a sunspot $\ep$-equilibrium payoff in the game with continuation payoff $y$.
To construct a sunspot $\ep$-equilibrium in the quitting game,
we would like to concatenate such $\ep$-equilibria.

If $y$ were a sunspot $\ep$-equilibrium in the game with continuation payoff $w(y)$, this could be done as follows:
we would choose an arbitrary $y^0 \in \partial D$ and define inductively $y^{k+1} := w(y^k)$.
We would then implement a sunspot $\ep$-equilibrium in the original game by playing first the $\ep$-equilibrium that corresponds to the payoff $y^0$
in the auxiliary game with geometric length $G(y^1)$ with continuation payoff $y^1$,
then the $\ep$-equilibrium that corresponds to the payoff $y^1$
in the auxiliary game with geometric length $G(y^2)$ with continuation payoff $y^2$, and so on.

As soon as the total probability of termination under this construction is 1,
the resulting strategy profile would be a sunspot $\ep$-equilibrium in the original quitting game.
There are two problems in implementing this approach.

First, $w(y)$ is a sunspot $\ep$-equilibrium payoff in the auxiliary game with continuation payoff $y$, and not vice versa.
Hence, if we choose $y^0 \in \partial D$ arbitrarily and define inductively $y^{k+1} := w(y^k)$,
then time goes backwards:
we should choose some large $K \in \dN$,
play first a sunspot $\ep$-equilibrium with payoff $y^K$ in the auxiliary game with geometric length $G(y^{K-1})$ with continuation payoff $y^{K-1}$,
then a sunspot $\ep$-equilibrium with payoff $y^{K-1}$ in the auxiliary game with geometric length $G(y^{K-2})$ with continuation payoff $y^{K-2}$,
and so on, until we play a sunspot $\ep$-equilibrium with payoff $y^1$ in the auxiliary game with geometric length $G(y^0)$ with continuation payoff $y^{0}$.
After that we let the player play in an arbitrary way.
If the probability that the game is not terminated by a player before we end playing the sequence of sunspot $\ep$-equilibria is small,
then the way players play after implementing the sunspot $\ep$-equilibrium in $G(y^0)$ does not affect much the payoff,
and we would still obtain a sunspot approximate equilibrium.

The second issue concerns the probability of termination.
The probability that the play in the auxiliary game with geometric length $G(y^k)$ with continuation payoff $y^{k}$
 terminates by a player under the sunspot $\ep$-equilibrium with payoff $y^{k+1}$
is $\sum_{i \in I_*} z_i(y^k)$.
By Condition~(F.5) this quantity is positive,
but we do not have a uniform lower bound on it.
Hence, we cannot ensure that the probability of termination by a player under the finite concatenation of sunspot $\ep$-equilibria in auxiliary games
with geometric length can be made arbitrarily high.
To overcome this difficulty we will construct a sequence $(y^k)_{k=1}^K$ that satisfies the required properties approximately.
This approach is close to Theorem~3 in Simon (2007).

\subsubsection{Proof of Theorem~\ref{theorem:2}}

To prove Theorem~\ref{theorem:2} we need two notations.
For every nonempty set $J \subseteq I$ of indices define
\[ S(J) := \conv\{\widehat r^i, i \in J\}. \]
For every $y \in \dR^n$ denote
\[ J_y := \{ i \in [n] \colon y_i = 0\}. \]

The proof of Theorem~\ref{theorem:2} is divided into three cases.
Let $y \in \partial D$.
\begin{itemize}
\item   The case $y \in S(J_y)$ holds trivially, since we can take $w^i=(1-\ep)y + \ep \widehat r^i$ for each $i \in [n]$,
$w = y$, and $(z_0,z_1,\cdots,z_n)$ is a probability distribution that satisfies that $y = \sum_{i \in J_y} z_i \widehat r^i$.
\item   The case $y \not\in S(J_y)$ and $\conv(S(J_y),y) \cap D = \{y\}$ is handled in Lemma~\ref{lemma:23}.
\item   The case $y \not\in S(J_y)$ and $\conv(S(J_y),y) \cap D \supsetneqq \{y\}$ is handled in Lemma~\ref{lemma:24}.
\end{itemize}

\begin{lemma}
\label{lemma:23}
Let $y \in \partial D$ such that $y \not\in S(J_y)$.
If $\conv(S(J_y),y) \cap D = \{y\}$ then the conclusion of Theorem~\ref{theorem:2} holds.
\end{lemma}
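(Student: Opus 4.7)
The plan is to exploit the $Q$-matrix property of $\widehat R$ through a perturbation-and-limit argument. Specifically, I would produce a nontrivial solution of $\lcp(\widehat R,y)$; from such a solution the required $(w,z,w^1,\ldots,w^n)$ can be built in a routine way. The delicate point is non-triviality of the limit, and this is where the geometric hypothesis $\conv(S(J_y),y)\cap D=\{y\}$ will enter.

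I would assume $J_y\neq\emptyset$, handling the case $J_y=\emptyset$ by a separate direct treatment. First I would perturb $y$ to $q_\delta:=y-\delta\sum_{i\in J_y}e_i$ for $\delta>0$. Since $q_\delta$ has a negative coordinate, the trivial solution is unavailable, and the $Q$-matrix property furnishes a solution $(w^\delta,z^\delta)$ of $\lcp(\widehat R,q_\delta)$ with $z^\delta_0<1$. Extracting a convergent subsequence as $\delta\to 0$ yields a limit $(w^0,z^0)$ solving $\lcp(\widehat R,y)$.

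The main obstacle is to show $z^0_0<1$. Suppose for contradiction $z^0_0=1$; then $\mu^\delta:=1-z^\delta_0\to 0$, and the normalized weights $\tilde z^\delta_i:=z^\delta_i/\mu^\delta$ form a probability distribution on $[n]$. Passing to further subsequences, I may assume $\tilde z^\delta\to\tilde z^*\in\Delta([n])$ and $\delta/\mu^\delta\to c\in[0,\infty)$, with the finiteness of $c$ following from boundedness of $p^\delta:=\sum_i\tilde z^\delta_i\widehat r^i$. Setting $p^*:=\lim p^\delta$, the LCP constraint $w^\delta_j\geq 0$ for $j\in J_y$ rearranges to $\mu^\delta p^\delta_j\geq(1-\mu^\delta)\delta$, so $p^*_j\geq c\geq 0$ for all $j\in J_y$. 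For $j\notin J_y$, complementarity together with $y_j>0$ forces $\tilde z^*_j=0$ (otherwise $w^\delta_j=0$ would give $p^\delta_j\to-\infty$), so $\supp(\tilde z^*)\subseteq J_y$ and $p^*\in S(J_y)$. Then for $t>0$ sufficiently small, the point $(1-t)y+tp^*$ has $j$th coordinate $tp^*_j\geq 0$ for $j\in J_y$ and coordinates close to $y_j>0$ for $j\notin J_y$, hence lies in $D$; it differs from $y$ because $p^*\in S(J_y)$ while $y\notin S(J_y)$. This contradicts $\conv(S(J_y),y)\cap D=\{y\}$, establishing $z^0_0<1$.

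Given the nontrivial solution $(w^0,z^0)$, I would set $w:=w^0$, which lies in $\partial D$ because it is nonnegative, belongs to $\conv(\widehat r^1,\ldots,\widehat r^n)$ (since $y$ does and $w$ is a convex combination of $y$ and the $\widehat r^i$'s), and has $w_i=0$ for any $i$ with $z^0_i>0$ by complementarity. For a small parameter $\lambda>0$, define $w^i:=\lambda\widehat r^i+(1-\lambda)w$ and rescale $z_0:=Tz^0_0$, $z_i:=Tz^0_i/\lambda$ for $i\in[n]$, where $T:=\lambda/(\lambda z^0_0+1-z^0_0)$ makes $z$ a probability distribution on $\{0,1,\ldots,n\}$. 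Then (F.1) holds because $\widehat r^i\neq w$---under the hypotheses of Theorem~\ref{theorem:1}(2) no $\widehat r^i$ can lie in $\dR^n_{\geq 0}$, else $w=\widehat r^i$ with $z_i=1$ would give a nontrivial solution to $\lcp(\widehat R,\vec 0)$; (F.2) holds for $\lambda$ small since $w\geq 0$ and $\|\widehat r^i\|_\infty\leq 1$; (F.3) is a direct calculation; (F.4) follows from $z_i>0\Rightarrow z^0_i>0\Rightarrow w_i=0\Rightarrow w^i_i=0$; and (F.5) follows from $z^0_0<1$, which gives $\sum_{i\geq 1}z_i=(1-z^0_0)/(\lambda z^0_0+1-z^0_0)>0$.
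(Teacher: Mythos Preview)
Your proof is correct and follows the same overall scheme as the paper: perturb $y$ so that it leaves $\dR^n_{\geq 0}$, invoke the $Q$-matrix property to solve the perturbed LCP, pass to a limit, use the geometric hypothesis $\conv(S(J_y),y)\cap D=\{y\}$ to rule out triviality of the limit, and then build $(w,w^i,z)$ from the limiting solution by interpolating $w^i=(1-\lambda)w+\lambda\widehat r^i$ and rescaling $z$. The last step is identical in both arguments.

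The two proofs differ in the perturbation and in the contradiction step. The paper perturbs $y$ \emph{toward} $S(J_y)$, taking $y_\ep\in\conv(S(J_y),y)\setminus\{y\}$ close to $y$; the hypothesis then immediately gives $y_\ep\notin D$, hence $y_\ep\notin\dR^n_{\geq 0}$. You instead push the zero coordinates of $y$ straight down, $q_\delta=y-\delta\sum_{i\in J_y}e_i$, which leaves the simplex but still forces nontriviality of the LCP solution. For the key step, the paper argues that if $w_\ep\to y$ then $J_{w_\ep}\subseteq J_y$ for small $\ep$, whence $w_\ep\in\conv(S(J_{w_\ep}),y_\ep)\subseteq\conv(S(J_y),y)$, and since $y\notin\conv(S(J_{w_\ep}),y_\ep)$ this set misses $D$, contradicting $w_\ep\in D$. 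Your route instead rescales the weights, shows $\supp(\tilde z^*)\subseteq J_y$ by a blow-up argument on the non-$J_y$ coordinates, and then exhibits a point $(1-t)y+tp^*$ of $\conv(S(J_y),y)\cap D$ distinct from $y$. The paper's contradiction is shorter once one sees the inclusion $J_{w_\ep}\subseteq J_y$; your argument is more computational but entirely self-contained and makes the role of the hypothesis equally transparent. Both proofs tacitly use $J_y\neq\emptyset$, which you at least flag.
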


\bigskip

\centerline{\includegraphics{figure.2}}

\centerline{Figure \arabic{figurecounter}, Part A: The construction in Lemma~\ref{lemma:23}.}

\centerline{\includegraphics{figure.4}}

\centerline{Figure \arabic{figurecounter}, Part B: The construction in Lemma~\ref{lemma:23}.}
\bigskip

\begin{proof}
For every $\ep > 0$ let $y_\ep \in \conv(S(J_y),y) \setminus \{y\}$ satisfy $d(y_\ep,y) \leq \ep$ (see Figure~\arabic{figurecounter}(A)).
Since $y \in D \subseteq \conv(\widehat r^1,\cdots,\widehat r^n)$, it follows that $y_\ep \in \conv(\widehat r^1,\cdots,\widehat r^n)$.
Since $\conv(S(J_y),y) \cap D = \{y\}$ we have $y_\ep \not\in D = \conv(\widehat r^1,\cdots,\widehat r^n) \cap \dR^n_{\geq 0}$, and therefore
$y_\ep \not\in \dR^n_{\geq 0}$.

Let $(w_\ep,z_\ep) \in \dR^n_{\geq 0} \times \Delta(\{0,1,\cdots,n\})$ be a solution of the linear complementarity problem $\lcp(R,y_\ep)$,
so that $w_{\ep,i} = 0$ or $z_{\ep,i} = 0$ for every $i \in [n]$, and
\[ w_\ep = z_{\ep,0} y_\ep + \sum_{i \in I_*} z_{\ep,i}\widehat r^i. \]
By Lemma~\ref{lemma:boundary}, $w_\ep \in \partial D$.
In particular, $w_\ep \in\conv(S(J_{w_\ep}),y_\ep) \cap D$.
By taking a subsequence we can assume that the limits
\[
w := \lim_{\ep \to 0} w_\ep, \ \ \
z := \lim_{\ep \to 0} z_\ep, \ \ \
y := \lim_{\ep \to 0} y_\ep
\]
exist.
We can moreover assume that the sets $(J(w_\ep))_{\ep > 0}$ are independent of $\ep$.
Since $w_\ep \in \partial D$ for every $\ep > 0$ it follows that $w \in \partial D$.
Note that $w = z_0 y + \sum_{i \in I_*} z_{i} \widehat r^i$.
Furthermore, for every $i \in [n]$ we have $w_i = 0$ or $z_i = 0$.
We argue that $w \neq y$.

Indeed, assume by contradiction that $y = w = \lim_{\ep \to 0} w_\ep$.
It follows that $J_{w_\ep} \subseteq J_y$ for every $\ep > 0$ sufficiently small.
In particular, $S(J_{w_\ep}) \subseteq S(J_y)$ for every $\ep > 0$ sufficiently small.
Since $\conv(S(J_y),y) \cap D = \{y\}$ and since $y_\ep \in \conv(S(J_y),y) \setminus \{y\}$,
we conclude that $\conv(S(J_{w_\ep}),y_\ep) \cap D = \emptyset$. But $w_\ep \in\conv(S(J_{w_\ep}),y_\ep) \cap D$, a contradiction.

Define
\[ \widehat z_0 := \frac{\ep z_0}{\ep z_0 + \sum_{i=1}^n z_i}, \ \ \ \ \ \widehat z_i := \frac{z_i}{\ep z_0 + \sum_{i=1}^n z_i}, \ \ \ \forall i \in [n]. \]
Note that $z_i > 0$ if and only if $\widehat z_i > 0$.
Therefore $w_i = 0$ or $\widehat z_i = 0$ for every $i \in [n]$.
Define for every $i \in J_w$,
\begin{equation}
\label{equ:w1}
w^i := (1-\ep) w + \ep \widehat r^i,
\end{equation}
see Figure~\arabic{figurecounter}(B).

We argue that the conclusion of Theorem~\ref{theorem:2} holds for $w$, $(w^i)_{i \in [n]}$, and $(\widehat z^i)_{i \in [n]}$.
By construction Conditions~(F.1) and~(F.2) hold.
If $\widehat z_i > 0$ then $z_i > 0$, hence $w_i=0$, which implies that $w^i_i = 0$, so that Condition~(F.4) holds as well.
Since $w = z_0 y + \sum_{i \in I_*} z_{i} r^i$,
and since $w \neq y$,
it follows that $\sum_{i \in [n]} z_{i} > 0$,
and therefore $\sum_{i \in [n]} \widehat z_{i} > 0$, implying that Condition~(F.5) holds.
We now verify that Condition~(F.3) holds as well.
By Condition~(F.2) and Eq.~\eqref{equ:w1},
\begin{eqnarray*}
\ep w =  \ep z_0y + \sum_{i \in I_*}  \ep z_i \widehat r^i
= \ep z_0y + \sum_{i \in I_*}  z_i (w^i - (1-\ep) w).
\end{eqnarray*}
This implies that
\[ w = \frac{\ep z_0y + \sum_{i \in I_*}  z_i w^i}{\ep z_0 + \sum_{i \in I_*}  z_i}
= \widehat z_0 y + \sum_{i \in I_*} \widehat z_i w^i, \]
and Condition~(F.3) holds as well.
\end{proof}
\addtocounter{figurecounter}{1}

\begin{lemma}
\label{lemma:24}
Let $y \in \partial D$ such that $y \not\in S(J_y)$.
If $\conv(S(J_y),y) \cap D \supsetneqq \{y\}$
then the conclusion of Theorem~\ref{theorem:2} holds.
\end{lemma}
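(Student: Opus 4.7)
I will follow the general strategy of Lemma~\ref{lemma:23}, constructing $w$ as a limit of LCP solutions for a perturbed $y$, but with a perturbation adapted to the opposite hypothesis. First, I rephrase the assumption $\conv(S(J_y), y) \cap D \supsetneqq \{y\}$ as the existence of $s^* \in S(J_y)$ whose restriction to the $J_y$-coordinates is nonnegative: a point $(1-t)y + t s$ lies in $D$ for some $t>0$ if and only if $s|_{J_y} \geq 0$, since its $J_y$-coordinates equal $t s|_{J_y}$ while its remaining coordinates stay positive for $t$ small (because $y_i > 0$ for $i \notin J_y$).

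Next, I perturb $y$ in the direction opposite to $s^*$: set $y_\ep := (1+\ep)y - \ep s^*$. The $J_y$-coordinates of $y_\ep$ are $-\ep s^*_i \leq 0$, with strict inequality somewhere (the edge case $s^*|_{J_y} = 0$ is immediate: then $s^* \in S(J_y) \cap D$ lies in $\partial D$, and writing $s^* = \sum_{i \in J_y} \alpha_i \widehat r^i$ one takes $w := s^*$, $z_0 := 0$, $z_i := \alpha_i$, with complementarity holding because $s^*_i = 0$ for every $i$ with $\alpha_i > 0$). Since $y_\ep \notin \dR^n_{\geq 0}$, the Q-matrix property of $\widehat R$ yields a nontrivial solution $(w_\ep, z_\ep)$ of $\lcp(\widehat R, y_\ep)$ with $z_{\ep,0} < 1$, and Lemma~\ref{lemma:boundary} gives $w_\ep \in \partial D$. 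A subsequential limit $(w, z) = \lim_{\ep \to 0}(w_\ep, z_\ep)$ satisfies $w \in \partial D$, $w = z_0 y + \sum_{i \in [n]} z_i \widehat r^i$, and the complementarity $w_i z_i = 0$ for $i \in [n]$.

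The main obstacle is ensuring $z_0 < 1$ in the limit. If $\lambda_\ep := 1 - z_{\ep,0} \to 0$, I rescale $\tilde z_\ep := z_\ep/\lambda_\ep \in \Delta([n])$; complementarity combined with $y_i > 0$ for $i \notin J_y$ forces $\tilde z_\ep$ to be supported on $J_y$ eventually, and the nonnegativity $w_{\ep,i} \geq 0$ at any $i \in J_y$ with $s^*_i > 0$ prevents the ratio $\ep/\lambda_\ep$ from diverging to $+\infty$. A careful analysis of the LCP identity divided by $\lambda_\ep$, passed to a subsequential limit, yields a nontrivial vector $\tilde z \in \Delta(J_y)$ which, after a convex blending with $y$ to restore nonnegativity on $[n] \setminus J_y$, upgrades to a genuine nontrivial solution $(w', z')$ of $\lcp(\widehat R, y)$ with $z'_0 < 1$. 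This blow-up step, reminiscent of the rate-analysis at the end of Section~\ref{section:proof:1}, is the hardest part of the argument.

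With a nontrivial $(w, z)$ in hand, the rest is routine, paralleling the end of the proof of Lemma~\ref{lemma:23}. Define $w^i := (1-\ep)w + \ep \widehat r^i$ for every $i \in [n]$; (F.1) is immediate, and (F.2) follows from $w \in \dR^n_{\geq 0}$ and $\widehat r^i_j \geq -1$. For $i$ with $z_i > 0$, complementarity gives $w_i = 0$, whence $w^i_i = (1-\ep)w_i = 0$, yielding (F.4). Finally, rescale the distribution by setting $\widehat z_0 := \ep z_0 / (\ep z_0 + (1-z_0))$ and $\widehat z_i := z_i / (\ep z_0 + (1-z_0))$; a direct computation, identical to the one at the end of the proof of Lemma~\ref{lemma:23}, verifies $w = \widehat z_0 y + \sum_{i \in [n]} \widehat z_i w^i$ (F.3), while $z_0 < 1$ ensures $\sum_{i \in [n]} \widehat z_i > 0$ (F.5).
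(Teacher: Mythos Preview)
Your approach differs substantially from the paper's. The paper does \emph{not} invoke the $Q$-matrix property here at all; it gives a short geometric construction. It splits into two subcases. If some segment $\conv(\widehat r^i, y)$ with $i \in J_y$ meets $D$ beyond $\{y\}$, take $w$ where that segment exits $D$; then $w_i = 0$ (since $y_i = \widehat r^i_i = 0$), and a single nonzero $z_i$ suffices. Otherwise, the shrunk simplex $(1-\ep)y + \ep S(J_y)$ meets $D$ (by the lemma's hypothesis) while all its vertices lie outside $D$ (by the subcase assumption), so it contains a point $w \in \partial D$, $w \neq y$; writing $w$ in the vertex basis gives $z$, with $z_0 = 0$. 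No limits, no LCP, no blow-up.

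Your LCP route has a genuine gap in the blow-up step. When $\lambda_\ep \to 0$ you correctly show that $\tilde z_\ep$ is eventually supported on $J_y$ and that $\ep/\lambda_\ep$ stays bounded, say with subsequential limit $\mu \geq 0$. Writing $\tilde r := \sum_j \tilde z_j \widehat r^j$, the rescaled identity on coordinates $i \in J_y$ gives $\tilde r_i - \mu s^*_i \geq 0$, with equality whenever $\tilde z_i > 0$. Your proposed blend $w' = (1-t)y + t\tilde r$, $z'_i = t\tilde z_i$, then has $w'_i = t\tilde r_i = t\mu s^*_i$ at any $i \in J_y$ with $\tilde z_i > 0$; this is \emph{strictly positive} whenever $\mu > 0$ and $s^*_i > 0$, so complementarity fails. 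You give no argument excluding $\mu > 0$ together with $s^*_i > 0$ on the support of $\tilde z$, and nothing in the setup appears to prevent it. The blending with $y$ restores nonnegativity off $J_y$ but does nothing for complementarity on $J_y$.

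Two smaller points. In your edge case $s^*|_{J_y} = 0$ you claim $s^* \in D$, but $s^*$ may have negative coordinates off $J_y$; indeed, if $s^*$ \emph{were} in $D$, your $(s^*, z)$ with $z_0 = 0$ would be a nontrivial solution of $\lcp(\widehat R, \vec 0)$, contradicting the standing hypothesis of Theorem~\ref{theorem:1}(2). The repair is easy: take $w = (1-t)y + ts^*$ for small $t$ rather than $w = s^*$. Finally, Lemma~\ref{lemma:boundary} requires $y_\ep \in \conv(\widehat r^1, \ldots, \widehat r^n)$, which the affine combination $y_\ep = (1+\ep)y - \ep s^*$ need not satisfy; this is harmless in the limit, but your pre-limit assertion $w_\ep \in \partial D$ is not justified as written.
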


\bigskip

\centerline{\includegraphics{figure.1}}

\centerline{Figure \arabic{figurecounter}, Part A: The construction in Lemma~\ref{lemma:24}.}

\centerline{\includegraphics{figure.3}}

\centerline{Figure \arabic{figurecounter}, Part B: The construction in Lemma~\ref{lemma:24}.}
\addtocounter{figurecounter}{1}

\bigskip

\begin{proof}
Fix $\ep > 0$ sufficiently small.
Assume first that there is $i \in J_y$ such that $\conv(\widehat r^i, y) \cap D \supsetneqq \{y\}$, see Figure~\arabic{figurecounter}(A).
Since $r^i \not\in D$ while $y \in \partial D$ and $\conv(\widehat r^i, y) \cap D \supsetneqq \{y\}$,
it follows that there is $w \in \conv(\widehat r^i, y) \cap \partial D$.
In particular, there is $\lambda \in (0,1)$ such that $w = \lambda \widehat r^i + (1-\lambda)y$.
Since $i \in J(y)$ it follows that $w_i = 0$.
Thus, Theorem~\ref{theorem:2} holds with $w^i = (1-\ep)w + \ep \widehat r^i$ for every $i \in I_*$,
and $z$ that is defined by
\[ z_0 := \frac{\ep(1-\lambda)}{\ep(1-\lambda)+\lambda}, \ \ \ \ \
z_i := \frac{\lambda}{\ep(1-\lambda)+\lambda}. \]
Indeed, since $w = \lambda r^i + (1-\lambda)y$
we have
\[ \ep w = \ep\lambda \widehat r^i + \ep(1-\lambda)y
= \lambda(w^i=(1-\ep)w) + \ep(1-\lambda)y, \]
so that
\[ w = \frac{\ep(1-\lambda)}{\ep(1-\lambda)+\lambda}y + \frac{\lambda}{\ep(1-\lambda)+\lambda} \widehat r^i. \]

Assume now that $\conv(\widehat r^i, y) \cap D = \{y\}$ for every $i \in J_y$ and consider the set (see Figure \arabic{figurecounter}(B))
\[ \widehat S_\ep := (1-\delta) y + \delta S(J_y). \]
Since $\conv(\widehat r^i,y) \cap D = \{y\}$, the set $\widehat S_\ep$ is not a subset of $D$.
Moreover, provided $\ep$ is sufficiently small, this set intersects $D$.
In particular, there is a point $w \in \partial D \cap \widehat S_\ep$.
Since $\delta > 0$, we have $w \neq y$.

For every player $i \in J(y)$ define
\[ w^i := (1-\ep) y + \ep r^i. \]
The points $(w^i)_{i \in J(y)}$ are the extreme points of the set $\widehat S_\ep$,
and therefore there is a probability distribution $z \in \Delta(J_{y})$ such that $w = \sum_{i \in I_*} z_i w^i$.
The reader can verify that the conclusion of Theorem~\ref{theorem:2} holds with $w$, $(w^i)_{i \in J_y}$, and $z$.
\end{proof}

\subsubsection{An Approximation Result}

We start by a technical observation that will serve as an approximation tool.

\begin{theorem}
\label{theorem:approximate}
Let $(X,d)$ be a complete metric space and let $f : X \to X$ be a function that does not have any fixed point.
For every $c,C \in \dR_{\geq 0}$ there are $K \in \dN$ and a sequence $(x^k)_{k=1}^K$ of points in $X$ such that the following properties hold:
\begin{itemize}
\item[(A.1)]   $\sum_{k=1}^K d(x^k,f(x^k)) > C$.
\item[(A.2)]   $\sum_{k=1}^{K-1} d(x^{k+1},f(x^k)) < c$.
\end{itemize}
\end{theorem}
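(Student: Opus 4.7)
The plan is to argue by contradiction. Suppose some pair $(c, C)$ is a counterexample, so that no finite sequence can satisfy both (A.1) and (A.2). Define a value function $V: X \to [0, C]$ by
\[
V(x) := \sup\Bigl\{ \sum_{k=1}^{K} d(x^k, f(x^k)) : K \in \dN,\ x^1 = x,\ \sum_{k=1}^{K-1} d(x^{k+1}, f(x^k)) < c \Bigr\}.
\]
The counterexample hypothesis gives $V(x) \leq C$ for every $x \in X$. Prepending $x$ to any near-optimal sequence starting at $f(x)$ adds no slippage (the new first slippage is $d(f(x), f(x)) = 0$), which yields the Caristi-type inequality
\[
d(x, f(x)) \leq V(x) - V(f(x)) \quad \text{for all } x \in X.
\]

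If $V$ were lower semicontinuous, Caristi's fixed-point theorem applied with potential $V$ would immediately produce a fixed point of $f$, contradicting the hypothesis, and I would be done. In general $V$ need not be lsc, so I instead argue directly using completeness. For any $x_0 \in X$, iterating the above inequality gives $\sum_{k \geq 0} d(f^k(x_0), f^{k+1}(x_0)) \leq V(x_0) \leq C$, so the $f$-orbit of $x_0$ is Cauchy and, by completeness, converges to some $\xi_1 \in X$. Since $f(\xi_1) \neq \xi_1$, repeating the construction at $\xi_1$ produces $\xi_2 := \lim_n f^n(\xi_1)$, and so on. A Zorn/transfinite-style argument in the spirit of Simon (2007, Theorem~3) shows that this iteration eventually stabilizes at a ``loopable'' point $\xi^* \in X$, meaning that $T(\xi^*) = \xi^*$ where $T(x) := \lim_n f^n(x)$; equivalently, the $f$-orbit of $\xi^*$ returns arbitrarily close to $\xi^*$.

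From such a $\xi^*$ I will construct the desired sequence by concatenating $M$ identical loops. Each loop iterates $f$ from $\xi^*$ for $N$ steps, contributing displacement close to
\[
U(\xi^*) := \sum_{k \geq 0} d(f^k(\xi^*), f^{k+1}(\xi^*)) \geq d(\xi^*, f(\xi^*)) > 0,
\]
and then resets to $\xi^*$ at slippage $d(\xi^*, f^{N+1}(\xi^*))$, which tends to zero as $N \to \infty$ by the defining property of $\xi^*$. Choosing $M > 2C/U(\xi^*)$ makes the total displacement exceed $C$, and choosing $N$ large enough per loop keeps the total slippage below $c$, yielding the desired contradiction.

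The hard part will be the middle step: rigorously producing the loopable point $\xi^*$ in the absence of any continuity or regularity assumption on $f$. This requires carefully combining completeness of $X$ with the no-fixed-point property through a Caristi- or Ekeland-style (possibly transfinite) iteration, and is precisely the place where the technique of Simon (2007) enters.
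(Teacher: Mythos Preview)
Your Caristi-type inequality $d(x,f(x)) \le V(x)-V(f(x))$ is correct and elegant, and you are right that if $V$ were lower semicontinuous Caristi's theorem would finish the proof.  The gap is in the ``loopable point'' step, and it is fatal as stated.

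Under your contradiction hypothesis, a point $\xi^*$ with $T(\xi^*)=\xi^*$ \emph{cannot} exist: your own loop-concatenation construction in the last paragraph would then produce a finite sequence satisfying (A.1) and (A.2), contradicting the hypothesis.  So the ``Zorn/transfinite-style argument'' you invoke cannot possibly terminate at a fixed point of $T$.  Concretely, take $X$ to be the ordinal $\omega^2+1$ embedded order-preservingly in $[0,1]$, with $f(x_\alpha)=x_{\alpha+1}$ for $\alpha<\omega^2$ and $f(x_{\omega^2})=x_0$.  Here $T(x_{\omega\cdot n})=x_{\omega\cdot(n+1)}$ and $T(x_{\omega^2})=x_\omega$, so $T$ has no fixed point at all; the $T$-iteration from any starting point never stabilises.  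Nor can you pass to limits in the $T$-iteration, since you have no bound on $\sum_n d(\xi_n,\xi_{n+1})$: each summand is only bounded by $C$, not by a telescoping quantity.

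What a transfinite argument actually delivers is not a loopable point but a transfinite $f$-orbit $(x^\alpha)_{\alpha<\alpha_*}$ along which the total displacement $\sum_\alpha d(x^\alpha,f(x^\alpha))$ diverges: set $x^{\alpha+1}=f(x^\alpha)$, take limits at limit ordinals whenever the partial sum is finite, and stop the first time it is infinite (which must happen at a countable ordinal, since uncountably many strictly positive terms cannot have a finite sum).  From that divergent transfinite sum you then pick a finite set $A$ of ordinals capturing more than $C$ of the displacement while leaving less than $c$ outside, and the triangle inequality along the intermediate steps controls the slippage.  This is the paper's proof; it is direct, needs no contradiction hypothesis, no value function $V$, and no loopable point.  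Your framework can be salvaged only by importing this construction wholesale, at which point the detour through $V$ and $T$ becomes redundant.
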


Figure~\arabic{figurecounter} provides a graphical depiction of Theorem~\ref{theorem:approximate};
each solid line represents the distance between some $x^k$ and $f(x^k)$,
and each dashed line represents the distance between some $f(x^k)$ and $x^{k+1}$.
The theorem claims that the total length of the solid lines is above $C$,
while the total length of the dashed lines is less than $c$.

\bigskip

\centerline{\includegraphics{figure.8}}

\centerline{Figure \arabic{figurecounter}: The construction in Theorem~\ref{theorem:approximate}.}
\addtocounter{figurecounter}{1}
\bigskip

\begin{proof}
The proof uses a transfinite construction.
We define an ordinal $\alpha_*$ and a sequence $(x^\alpha)_{\alpha < \alpha_*}$ as follows:
\begin{itemize}
\item[(TI.1)]   $x^0 \in X$ is arbitrary.
\item[(TI.2)]   If $\alpha$ is a successor ordinal set $x^{\alpha} := f(x^{\alpha-1})$.
\item[(TI.3)]   If $\alpha$ is a limit ordinal and $\sum_{\beta < \alpha} d(x^\beta,f(x^\beta)) = \infty$,
set $\alpha_* := \alpha$ and terminate the definition of the sequence.
\item[(TI.4)]   If $\alpha$ is a limit ordinal and $\sum_{\beta < \alpha} d(x^\beta,f(x^\beta)) < \infty$, set $x^\alpha := \lim_{\beta < \alpha} x^\beta$.
\end{itemize}
We note that if $\alpha$ is a limit ordinal and $\sum_{\beta < \alpha} d(x^\beta,f(x^\beta)) < \infty$
then for every $\ep > 0$ there is an ordinal $\alpha^\ep < \alpha$ for which $\sum_{\alpha^\ep \leq \beta < \alpha} d(x^\beta,f(x^\beta)) < \ep$,
which implies the existence of the limit $\lim_{\beta < \alpha} x^\beta$.
Since the space $X$ is complete, the limit $\lim_{\beta < \alpha} x^\beta$ in Case~(TI.4) is in $X$,
hence the definition of the sequence $(x^\alpha)_{\alpha < \alpha_*}$ is valid.

Since $f$ has no fixed point, $d(x,f(x)) > 0$ for every $x \in X$, and therefore the construction ends at some ordinal $\alpha_*$.
In fact, since the set of rational numbers is dense in the set of real numbers,
the ordinal $\alpha_*$ is a countable ordinal.
Since $\sum_{\beta < \alpha_*} d(x^\beta,f(x^\beta)) = \infty$,
there is an ordinal $\alpha_1$ such that $\sum_{\beta < \alpha_1} d(x^\beta,f(x^\beta)) > C+1$
By definition,
\[ \sum_{\alpha < \alpha_*} d(x^\alpha,f(x^\alpha)) = \sup_{A}\sum_{\alpha \in A} d(x^\alpha,f(x^\alpha)), \]
where $A$ ranges over all finite sets of ordinals smaller than $\alpha_*$,
hence there is a finite set $A$ of ordinals smaller than $\alpha_1$ such that the following two conditions hold:
\begin{itemize}
\item[(A.1')]   $\sum_{\alpha \in A} d(x^\alpha,f(x^\alpha)) > C$.
\item[(A.2')]   $\sum_{\alpha \not\in A, \alpha < \alpha_1} d(x^\alpha,f(x^\alpha)) < c$.
\end{itemize}
Denote $K := |A|$ and $A = \{u^1,u^2,\cdots,u^K\}$,
and assume that $u^1 < u^2 < \cdots < u^K$.
By Condition~(A.1'),
\[ \sum_{k=1}^K d(x^{u^k},f(x^{u^k})) = \sum_{\alpha \in A} d(x^\alpha,f(x^\alpha)) > C. \]
By the triangle inequality and Condition~(A.2'),
\begin{eqnarray*}
\sum_{k=1}^K d(x^{u^k},f(x^{u^{k+1}}))
&\leq&
\sum_{u^k \leq \alpha < u^{k+1}}d(f(x^\alpha),x^{\alpha+1})\\
&\leq&
\sum_{\alpha \not\in A, \alpha < \alpha_1} d(x^\alpha,f(x^\alpha)) < c.
\end{eqnarray*}
The result follows.
\end{proof}

%

\subsubsection{Constructing a Strategy Profile $\xi^*$}

Fix $\ep > 0$ sufficiently small so that each of the vectors $\widehat r^i$ contains an entry that is smaller than $-\ep$.
We now define a strategy profile $\xi^*$ in the quitting game,
which will turn out to be a sunspot $10\ep$-equilibrium.

We note that under the assumptions of Theorem~\ref{theorem:1}, the function $w : \partial D \to \partial D$ does not have a fixed point.
Indeed, the existence of such a fixed point implies that the linear complementarity problem $\lcp(\widehat R,\vec 0)$
has a nontrivial solution.

By Theorem~\ref{theorem:approximate} applied to $C = \tfrac{\binom{n}{2} \cdot 2(1+\ep)}{\ep^2}$, $c=\ep$,
$X = \partial D$ endowed with the supremum norm, and $f(y) = w(y)$ for every $y \in \partial D$,
there are $K \in \dN$ and a sequence $(y^k)_{k=1}^K$ that satisfy
\begin{itemize}
\item[(A.1'')]   $\sum_{k=1}^K \|y^k-w(y^k)\|_\infty > \tfrac{\binom{n}{2} \cdot 2(1+\ep)}{\ep^2}$.
\item[(A.2'')]   $\sum_{k=1}^{K-1} \|y^{k+1}-w(y^k)\|_\infty < \ep$.
\end{itemize}

For every $k \in [K]$ let $C_k \in \dN$ be sufficiently large such that
\[ 1-(1-\lambda_i(y^{k}))^{1/C_k} < \ep, \ \ \ \forall i \in I_*. \]
We argue that $\lambda_i(y^k) < 1$, hence such $C_k$ exists.
Indeed, if $\lambda_i(y^k) = 1$ then necessarily $\widehat r^i \in \dR^n_{\geq 0}$,
which implies that the linear complementarity problem $\lcp(\widehat R,\vec 0)$ has a nontrivial solution,
contradicting the assumptions.

The strategy profile $\xi^*$ that we will construct will yield a payoff close to $w(y^K)$.
We will partition the set of stages $\dN$ into $K+1$ kiloblocks of random (possibly infinite) size.
For each $k \in [K]$, kiloblock $k$ will mimic the sunspot $\ep$-equilibrium in the auxiliary game with geometric length $G(y^{K-k+1})$
with continuation payoff $y^{K-k+1}$
that yields equilibrium payoff $w(y^{K-k+1})$.
The last kiloblock will represent the rest of the game.
Condition~(A.1'') will imply that under $\xi^*$ with high probability the play terminates in one of the first $K$ kiloblocks.
Condition~(A.2'') will imply that the fact that the equilibrium payoff of the play in the $k$'th kiloblock, namely, $w(y^{K-k+1})$,
differs from the continuation payoff in the auxiliary game $G(y^{K-k})$,
does not affect much the payoffs of the players.

Partition the set of stages $\dN$ into $K+1$ kiloblocks of random (possibly infinite) size as follows.
For $k \in [K]$, the kiloblock is divided into blocks of size $C_{k}$; each block has a \emph{type} from the set $\{0,1,\cdots,n\}$.
At the beginning of the kiloblock, as well as at the end of each block, the type of the coming block is chosen by nature.
\begin{itemize}
\item   With probability $z_i$ the type of the next block is $i$.
\item   With probability $z_0$ the type of the next block is 0, this block is the last block of the kiloblock,
and the next kiloblock starts once this block ends.
\end{itemize}
The last kiloblock, which is the $(K+1)$'th kiloblock, is not divided into blocks and contains all remaining stages.

Define a strategy profile $\xi^*$ as follows. At stage $t$,
\begin{itemize}
\item   If $t$ lies in a block of type $i \in [n]$ is kiloblock $k \in [K]$,
then at stage $t$ player~$i$ quits with probability $1-(1-\lambda_i(y^{K-k+1}))^{1/C_{K-k+1}}$,
and all other players continue.
\item   If $t$ lies in the last block of a kiloblock (and then its type is necessarily 0),
or in the $(K+1)$'th kiloblock, then all players continue at stage $t$.
\end{itemize}

\subsection{The Strategy Profile $\xi^*$ is a Sunspot $7\ep$-equilibrium.}

In this section we prove that the strategy profile $\xi^*$ is a sunspot $7\ep$-equilibrium.
We will use the following inequality, which holds since,
by Condition~(F.3), $w-y = \sum_{i=1}^n z_i(w^i-y)$:
\begin{equation}
\label{equ:62}
\|w-y\|_\infty \leq 2\sum_{i=1}^n z_i.
\end{equation}

We first prove that the expected payoff of the normal players under the strategy profile $\xi^*$ is close to $w(y^K)$.
\begin{lemma}
\label{lemma:equilibrium}
For every normal player $i \in I_*$ we have
$| \gamma_i(\xi^*) - w_i(y^K)| < 2\ep$.
\end{lemma}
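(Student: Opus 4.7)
The plan is to introduce, for each $k \in \{1,\dots,K+1\}$, the vector $V_k \in \dR^n$ equal to the expected payoff of the normal players conditional on the play reaching kiloblock $k$, so that $V_1$ is exactly the restriction of $\gamma(\xi^*)$ to $I_*$. I then compare each $V_k$ to $w(y^{K-k+1})$, the payoff that the auxiliary game $G(y^{K-k+1})$ would produce if its ``type~$0$'' branch really paid off $y^{K-k+1}$.

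\textbf{Step 1 (one-step identity).} Kiloblock $k$ is a literal copy of $G(y^{K-k+1})$, modified only in that its ``type~$0$'' branch, which in $G(y^{K-k+1})$ pays $y^{K-k+1}$, here continues into kiloblock $k+1$ and therefore pays $V_{k+1}$. Writing $z := z(y^{K-k+1})$, $\lambda_i := \lambda_i(y^{K-k+1})$, and setting
\[ c_k := \frac{z_0}{z_0 + \sum_{i=1}^n z_i \lambda_i}, \]
the standard geometric-length fixed-point calculation gives
\[ V_k = \frac{z_0 V_{k+1} + \sum_{i=1}^n z_i \lambda_i \widehat r^i}{z_0 + \sum_{i=1}^n z_i \lambda_i}, \qquad w(y^{K-k+1}) = \frac{z_0\, y^{K-k+1} + \sum_{i=1}^n z_i \lambda_i \widehat r^i}{z_0 + \sum_{i=1}^n z_i \lambda_i}, \]
hence $V_k - w(y^{K-k+1}) = c_k(V_{k+1} - y^{K-k+1})$. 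Note that $c_k$ is also exactly the conditional probability that kiloblock $k$ ends without any player quitting.

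\textbf{Step 2 (telescoping).} Iterating this identity and inserting $\pm w(y^{K-k})$ at each stage collapses into
\[ V_1 - w(y^K) = \Bigl(\prod_{k=1}^K c_k\Bigr)(V_{K+1} - y^1) + \sum_{m=1}^{K-1}\Bigl(\prod_{k=1}^m c_k\Bigr)\bigl(w(y^{K-m}) - y^{K-m+1}\bigr). \]
Since each partial product is at most $1$, in $\ell_\infty$ the second summand is dominated by $\sum_{j=1}^{K-1}\|w(y^j) - y^{j+1}\|_\infty < \ep$, directly by (A.2''). Since all players continue throughout kiloblock $K+1$, the payoff on that event is the restriction of $r^\emptyset$ to $I_*$, so $V_{K+1} \in [-1,1]^n$; combined with $y^1 \in \partial D \subseteq [0,1]^n$ this gives $\|V_{K+1} - y^1\|_\infty \leq 2$. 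It remains only to prove $\prod_k c_k < \ep/2$.

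\textbf{Step 3 (drift-to-termination, the main step).} Eliminating $w^i = \lambda_i \widehat r^i + (1-\lambda_i)w$ from (F.3) yields the explicit formula $w - y = \sum z_i \lambda_i (\widehat r^i - y)/(z_0 + \sum z_i \lambda_i)$, and $\|\widehat r^i - y\|_\infty \leq 2$ then gives the sharpened inequality
\[ \|w(y) - y\|_\infty \leq 2\bigl(1 - c(y)\bigr) \qquad \text{for every } y \in \partial D, \]
slightly stronger than (62) (which only controls $\|w-y\|_\infty$ by $2(1-z_0)$). Combining with (A.1'') and the elementary inequality $\prod c_k \leq \exp(-\sum(1-c_k))$ yields
\[ \prod_{k=1}^K c_k \leq \exp\Bigl(-\tfrac12 \textstyle\sum_{j=1}^K \|w(y^j)-y^j\|_\infty\Bigr) < \exp\bigl(-\tbinom{n}{2}(1+\ep)/\ep^2\bigr), \]
which is far below $\ep/2$ for $\ep$ small. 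Together with Step~2 this yields $\|V_1 - w(y^K)\|_\infty < 2\ep$, which is the lemma. The main technical point is isolating the sharp $2(1-c(y))$ bound above: it is precisely this sharpening (rather than the recorded~(62)) that makes the constant in (A.1'') exactly calibrated to make $\prod c_k$ absorb the arbitrary terminal value $V_{K+1}$.
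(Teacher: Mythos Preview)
Your proof is correct and follows essentially the same route as the paper, just organized differently. The paper packages the recursion through the kiloblocks as a sub/super\-martingale $(\eta_i^k)$ with correction terms $\sum_{l<k}\|y^{K-l+1}-w(y^{K-l})\|_\infty$, whereas you unroll the same recursion explicitly into the telescoping identity
\[
V_1 - w(y^K) = \Bigl(\textstyle\prod_{k=1}^K c_k\Bigr)(V_{K+1}-y^1) + \sum_{m=1}^{K-1}\Bigl(\textstyle\prod_{k\le m} c_k\Bigr)\bigl(w(y^{K-m})-y^{K-m+1}\bigr).
\]
Both arguments use (A.2'') to control the accumulated drift and (A.1'') to control the terminal term; the content is the same.

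One point worth noting: your sharpening of~(\ref{equ:62}) to $\|w(y)-y\|_\infty \le 2(1-c(y))$ is not merely cosmetic. The true non-termination probability in kiloblock $k$ is $c_k = z_0/(z_0+\sum_i z_i\lambda_i)$, and the paper's~(\ref{equ:62}) alone only yields $\sum_i z_i \ge \tfrac12\|w-y\|_\infty$, which bounds $1-z_0$ rather than $1-c_k$. The paper's termination claim therefore implicitly relies on the specific constructions in Lemmas~\ref{lemma:23}--\ref{lemma:24} (where $\lambda_i$ is of order $\ep$), while your inequality establishes $\prod_k c_k < \ep/2$ directly from Conditions~(F.1)--(F.5) without appealing to how the $w^i$ were built. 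So your Step~3 is a cleaner and more self-contained version of the paper's termination estimate.
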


\begin{proof}
Fix a normal player $i \in I_*$.
Define a stochastic process $\eta_i = (\eta_i^k)_{k=1}^{K+1}$ as follows:
\begin{itemize}
\item   If the play was terminated before kiloblock $k$ by the set of players $S_*$, set
\begin{equation}
\label{equ:16.1}
\eta_i^k := \widehat r_i^{S_*} + \sum_{l < k} \| y_i^{K-l+1} - w_i(y^{K-l}) \|_\infty,
\end{equation}
where $S_*$ is the set of players who quit at stage $t_*$.
\item   If the play was not terminated before kiloblock $k$,
set
\begin{equation}
\label{equ:16.2}
\eta_i^k := w_i(y^{K-k+1}) + \sum_{l < k} \| y_i^{K-l+1} - w_i(y^{K-l}) \|_\infty. 
\end{equation}
\end{itemize}

By Eq.~\eqref{equ:62} and Condition~(A.1'') we have $\sum_{k=1}^K z_i(y^k) \geq \tfrac{1}{\ep}$, hence
under the strategy profile $\xi^*$ the play terminates during the first $K$ kiloblocks
with probability at least $1-\ep$.
By Condition~(F.3), the process $\eta_i$ is a submartingale under the strategy profile $\xi^*$, hence
\[ w_i(y^K) = \eta_i^1 \leq \E_{\xi^*}[\eta_i^{K+1}] \leq \gamma_i(\xi^*) + 2\ep, \]
where the last inequality holds by the choice of $c$, Condition~(A.2''), and since with high probability the play terminates during the first $K$ kiloblocks.

Similarly, if one replaces the plus sign in Eqs.~\eqref{equ:16.1} and~\eqref{equ:16.2} with a minus sign,
the process $\eta_i$ becomes a supermartingale under the strategy profile $\xi^*$, hence
\[ w_i(y^K) = \eta_i^1 \geq \E_{\xi^*}[\eta_i^{K+1}] \geq \gamma_i(\xi^*) - 2\ep, \]
and the claim follows.
\end{proof}

\bigskip

The next lemma, which relies on the definition of $C$, states that
even if some normal player $i \in I_*$ deviates and continues forever, the play terminates with high probability before the end of the $K$'th kiloblock.


\begin{lemma}
\label{lemma:ri}
If $\widehat r^i \not\in \dR^n_{\geq 0}$ for every $i \in I_*$,
then for every player $j \in I_*$ we have
\[ \prob_{(C_j,\xi^*_{-j})}(t_* \hbox{ is smaller than the stage in which kiloblock } K+1 \hbox{ starts}) \geq 1-\ep. \]
\end{lemma}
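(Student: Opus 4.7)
The plan is to show that under the deviation $(C_j,\xi^*_{-j})$, the probability of reaching the $(K+1)$st kiloblock is at most $\ep$, by computing this probability as a product across kiloblocks and bounding each factor in terms of $\|w(y^{K-k+1}) - y^{K-k+1}\|_\infty$.

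First I would describe the law of the play under $(C_j,\xi^*_{-j})$. Conditional on reaching kiloblock $k \in [K]$, nature draws block types i.i.d.\ from $z(y^{K-k+1})$. Writing $z^k := z(y^{K-k+1})$ and $\lambda_i^k := \lambda_i(y^{K-k+1})$, a block of type $0$ ends the kiloblock without terminating the play, a block of type $i\in [n]\setminus\{j\}$ terminates with probability $\lambda_i^k$, and a block of type $j$ has no effect because player~$j$ continues throughout. A geometric computation gives
\[ 1 - q_k^j \;=\; \frac{z_0^k}{z_0^k + \sum_{i \in [n]\setminus\{j\}} z_i^k \lambda_i^k}, \]
and by independence across kiloblocks, the probability of not terminating in the first $K$ kiloblocks is $\prod_{k=1}^K (1-q_k^j)$. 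Thus it suffices to prove $\sum_{k=1}^K q_k^j \geq \log(1/\ep)$.

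Second, and central, I would produce a lower bound of the form $q_k^j \geq \alpha\,\|w(y^{K-k+1}) - y^{K-k+1}\|_\infty$ for a constant $\alpha > 0$ depending only on $n$ and $\ep$. The no-deviation analogue $q_k \geq \tfrac{1}{2}\|w(y^{K-k+1}) - y^{K-k+1}\|_\infty$ follows directly from Condition~(F.3): substituting $w^i = (1-\lambda_i) w + \lambda_i \widehat r^i$ into $w = z_0 y + \sum_i z_i w^i$ yields $(w - y)(z_0 + \sum_i z_i \lambda_i) = \sum_i z_i \lambda_i (\widehat r^i - y)$, whose right-hand side has $\infty$-norm at most $2\sum_i z_i \lambda_i$. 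For the deviated version, I would split on $z_j^k$: if $z_j^k = 0$, the deviation is invisible and $q_k^j = q_k$; if $z_j^k > 0$, Condition~(F.4) gives $w_j(y^{K-k+1}) = 0$, and combining Condition~(F.2) with the hypothesis that each $\widehat r^i$ has a coordinate $< -\ep$ (which holds for all $i\in I_*$ by the assumption $\widehat r^i \not\in \dR^n_{\geq 0}$) shows that the single-player displacement by $j$ alone is tightly constrained, so that the bulk of $\|w - y\|_\infty$ must be contributed by $\sum_{i\neq j} z_i^k \lambda_i^k (\widehat r^i - y)$, which is precisely what drives $q_k^j$.

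Third, I would sum over $k$ and invoke Condition~(A.1''), $\sum_{k=1}^K \|y^k - w(y^k)\|_\infty > \binom{n}{2}\cdot 2(1+\ep)/\ep^2$. With $\alpha$ of order $1/\binom{n}{2}$ as above, one obtains $\sum_k q_k^j \gtrsim 1/\ep^2$, which dominates $\log(1/\ep)$ for small $\ep$, and the inequality $1-x\leq e^{-x}$ gives $\prod_k (1 - q_k^j) \leq \ep$.

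The main obstacle is the case in Step~2 where $z_j^k \lambda_j^k$ is comparable in size to $\sum_{i\neq j} z_i^k \lambda_i^k$: the trivial bound $q_k^j \geq q_k - z_j^k \lambda_j^k$ can then be vacuous, and a crude estimate from $1 - q_k^j = z_0^k/(z_0^k + \sum_{i \neq j} z_i^k \lambda_i^k)$ lets $q_k^j$ collapse. The resolution must use the geometric structure of $D = \conv(\widehat r^1,\dots,\widehat r^n)\cap \dR^n_{\geq 0}$ and the fact that $w(y^{K-k+1}) \in \partial D \subseteq \dR^n_{\geq 0}$ together with $w^i \geq -\ep$ coordinatewise to prevent player~$j$ alone from accounting for most of the displacement $w-y$ at a coordinate where $\widehat r^j$ is strongly negative. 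Carefully quantifying this using the hypothesis of the lemma is where the argument puts in its real work.
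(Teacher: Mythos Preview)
Your proposal has a genuine gap: the per-kiloblock bound $q_k^j \geq \alpha\,\|w(y^{K-k+1})-y^{K-k+1}\|_\infty$ that you aim for in Step~2 is false in general. Consider a kiloblock in which $z_i^k=0$ for all $i\notin\{0,j\}$ while $z_j^k>0$ and $z_0^k>0$. Conditions~(F.1)--(F.5) allow this (one checks that then $w$ is a nontrivial convex combination of $y$ and $\widehat r^{\,j}$, with $y_j=0$), and in that kiloblock your formula gives $q_k^j=0$ while $\|w-y\|_\infty>0$. So no constant $\alpha>0$ works, and the ``resolution'' you sketch cannot succeed at the kiloblock level: the geometric constraints on $w$, $w^i$, and $\partial D$ do not prevent player~$j$ from being the \emph{sole} quitter in a given kiloblock.

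The paper's proof sidesteps this by aggregating. It partitions the $K$ kiloblocks into $L=\binom{n}{2}/\ep$ \emph{megablocks}, each with $\sum_k\|y^k-w(y^k)\|_\infty>2/\ep$ (this is exactly what the constant $C=\binom{n}{2}\cdot 2(1+\ep)/\ep^2$ in~(A.1'') was chosen for). The key observation is that within one megablock it is impossible for a \emph{single} player~$i$ to account for essentially all the quitting: if every player except $i$ quit with total probability $<\ep$ over the megablock, then the payoff $w(y^{K-k_l+1})\in\partial D\subseteq\dR^n_{\geq 0}$ at the start of the megablock would lie within $n\ep$ of $\widehat r^{\,i}$, contradicting $\widehat r^{\,i}\notin\dR^n_{\geq 0}$ for $\ep$ small. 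Hence in each megablock at least \emph{two} players quit with probability $\geq\ep$, so even after player~$j$ switches to $C_j$ some other player still terminates that megablock with probability $\geq\ep$; a pigeonhole over the $\binom{n}{2}$ pairs and the $L$ megablocks finishes the bound. The missing idea in your approach is precisely this aggregation step: the hypothesis $\widehat r^{\,i}\notin\dR^n_{\geq 0}$ only bites once enough displacement has accumulated to force the payoff close to some $\widehat r^{\,i}$, which need not happen in any individual kiloblock.
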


\begin{proof}
Set $L:=\tfrac{\binom{n}{2}}{\ep}$.
By the choice of $C$ and Condition~(A.1''), there are $1 = k_1 < k_2 < \cdots < k_L < k_{L+1} = K$ such that
\begin{equation}
\label{equ:91}
\sum_{k=k_l}^{k_{l+1}} d(y^k,w(y^k)) > \tfrac{2}{\ep}, \ \ \ \forall l \in [L].
\end{equation}
Call the collection of kiloblocks $\{k \colon k_l \leq k < k_{l+1}\}$ the \emph{$l$'th megablock}.
Eqs.~\eqref{equ:62} and~\eqref{equ:91} implies that under the strategy profile $\xi^*$ the probability that the play terminates during each megablock
is at least $1-\ep$.
We claim that there are at least two normal players who, under strategy profile $\xi^*$, quit during each megablock with probability at least $\ep$.
Indeed, consider the $l$'th megablock and assume by way of contradiction
that there is a unique player $i \in I_*$ who quits with probability larger than $\ep$ during this megablock.
Then $\| w(y^{K-k_l+1}) - \widehat r^i\|_\infty \leq n\ep$.
However, $w(y^{K-k_l+1}) \in \partial D \subseteq \dR^n_{\geq 0}$,
while $\widehat r^i \not\in \dR^n_{\geq 0}$, a contradiction when $\ep$ is sufficiently small.

Since there are $\binom{n}{2}$ pairs of players, the choice of $L$ implies that there is a pair of players
who quit with probability at least $\ep$ in at least $\tfrac{1}{\ep}$ megablocks.
The result follows.
\end{proof}

\bigskip

The next result complete the proof that $\xi^*$ is a sunspot $7\ep$-equilibrium.

\begin{lemma}
\label{lemma:deviation}
For every player $i \in I$ and every pure strategy $\xi_i \in X_i$ we have $\gamma_i(\xi_i,\xi^*_{-i}) \leq w_i(y^K) + 5\ep$.
\end{lemma}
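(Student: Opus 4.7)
The plan is to adapt the supermartingale argument behind Lemma~\ref{lemma:equilibrium} to the deviated profile $(\xi_i,\xi^*_{-i})$. Because $\xi_i$ is a pure strategy, it corresponds to a signal-adapted stopping time $\tau \in \dN \cup \{\infty\}$ at which player~$i$ quits (or $\infty$ if he never does). I would define a process $\tilde\eta_i^k$ on kiloblocks analogous to Eqs.~\eqref{equ:16.1}--\eqref{equ:16.2} but under the deviated profile and with the opposite sign on the cumulative correction term, so that $\tilde\eta_i^k$ becomes a supermartingale up to a total error of $O(\ep)$, and then chain
\begin{equation*}
\gamma_i(\xi_i,\xi^*_{-i}) \;\leq\; \E_{(\xi_i,\xi^*_{-i})}[\tilde\eta_i^{K+1}] + O(\ep) \;\leq\; \tilde\eta_i^1 + O(\ep) \;=\; w_i(y^K) + O(\ep).
\end{equation*}
The cross-kiloblock drift $\sum_k\|y^{K-k+1}-w(y^{K-k})\|_\infty < \ep$ is controlled by Condition~(A.2''), and the probability of reaching the terminal kiloblock~$K+1$ is at most $\ep$ by Lemma~\ref{lemma:ri} (where the payoff, being bounded by $1$, contributes at most $\ep$).

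The heart of the argument is the per-kiloblock supermartingale step, which reduces to bounding player~$i$'s one-shot gain from quitting at the single stage~$\tau$. I would split into three cases by the type of block containing $\tau$. First, if $\tau$ falls in a block of type~$0$ or in kiloblock~$K+1$, then no other player quits at $\tau$, so quitting yields $r^i_i = 0$ while continuing keeps the $i$-coordinate of the continuation payoff at $\geq w_i(y^{K-k+1}) \geq 0$. Second, if $\tau$ falls in a block of type $j \neq i$, the per-stage quitting probability of $j$ is at most $\ep$ by the choice of $C_{K-k+1}$, so quitting returns at most $\ep\cdot 1 + (1-\ep)\cdot 0 = \ep$, while continuing yields an $i$-coordinate $\geq w^j_i \geq -\ep$ by Condition~(F.2) (using $w_i\geq 0$ to propagate this bound through intermediate convex combinations of $\widehat r^j$ and $w$). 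Third, if $\tau$ falls in a block of type~$i$, then Condition~(F.4) gives $w^i_i = 0$; combined with $\widehat r^i_i = 0$ and $\lambda_i < 1$ this forces $w_i = 0$, so the continuation expected payoff throughout a type-$i$ block is a convex combination of $\widehat r^i$ and $w$ whose $i$-th coordinate is $0$ at every intermediate stage, making player~$i$ exactly indifferent there.

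The main obstacle I anticipate is verifying case~(iii) at every intermediate stage of a type-$i$ block, not merely at its boundary; this rests on the derivation $w_i = 0$ above, which in turn uses the hypothesis of Theorem~\ref{theorem:1}(2) that $\lcp(\widehat R,\vec 0)$ has no nontrivial solution. Once this is settled, the three cases combine into the per-kiloblock supermartingale step up to an $O(\ep)$ error localized at the single quit decision, and adding the cumulative drift bound and the terminal-kiloblock probability bound yields the claimed $5\ep$ slack. The degenerate case $\tau=\infty$ is handled automatically: since $z_i > 0$ implies $w_i = 0$, the expected contribution of any type-$i$ block to player~$i$'s payoff equals $z_i w_i = 0$ whether he quits or continues, so always continuing cannot improve on $w_i(y^K)+O(\ep)$. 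For an abnormal player $i \notin I_*$, the same argument goes through once $w$ is extended to $\dR^N$ via the full vectors $r^i$ and one invokes Eq.~\eqref{equ:abnormal1} to note that every normal-player termination under $\xi^*$ already delivers player~$i$ a strictly positive payoff.
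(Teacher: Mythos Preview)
Your proposal is correct and follows essentially the same approach as the paper. The paper defines the supermartingale $(\eta_i^t)_t$ at the \emph{stage} level rather than the kiloblock level, which absorbs your three-case analysis directly into the definition of the process, but the underlying estimates are identical: indifference on type-$i$ blocks via $w^i_i=\widehat r^i_i=0\Rightarrow w_i=0$, the $\ep$-bound on quitting in a type-$j$ block because the per-stage quit probability is below $\ep$, the lower bound $-\ep$ on continuation from Condition~(F.2), the drift control from (A.2''), and the invocation of Lemma~\ref{lemma:ri} for the terminal kiloblock; the abnormal-player case is handled in the paper by the same observation you make via Eq.~\eqref{equ:abnormal1}.
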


\begin{proof}
Consider first a normal player $i \in I_*$.
We define a stochastic process $\eta_i = (\eta^t_i)_{t \in \dN}$, which approximates the expected continuation payoff of player~$i$ from stage $t$ and on.
Unlike in the proof of Lemma~\ref{lemma:equilibrium}, where the process was defined for kiloblocks, here it is defined for stages.
For each stage $t$ that lies in kiloblock $k$:
\begin{itemize}
\item   If the play was terminated at some stage $t_* < t$, set
\[ \eta_i^t := \widehat r^{S_*}_i - \sum_{l < k} \| y^{K-l+1} - w(y^{K-l}) \|_\infty, \]
where $S_*$ is the set of players who quit at stage $t_*$.
\item   If $t$ lies in a block of type 0 in the $k$'th kiloblock, we set
\[ \eta_i^t := y_i^{K-k+1} - \sum_{l < k} \| y^{K-l+1} - w(y^{K-l}) \|_\infty. \]
\item   If $k$ is the first stage of a block of type~$i$ in the $k$'th kiloblock, we set
\[ \eta_i^t := w_i(y^{K-k+1}) - \sum_{l < k} \| y^{K-l+1} - w(y^{K-l}) \|_\infty. \]
\item   If $k$ is the $l$'th stage of a block of type~$i$ in the $k$'th kiloblock, we set
\[ \eta_i^t := \delta w_i(y^{K-k+1}) + (1-\delta)\widehat r^i_i  - \sum_{l < k} \| y^{K-l+1} - w(y^{K-l}) \|_\infty, \]
where $\delta = (1-\lambda_i(y^{K-k+1}))^{(C_{K-k+1}-l+1)/C_{K-k+1}}$.
\end{itemize}
By Conditions~(F.3) and~(A.2''), the process $(\eta^t_i)_{t \in \dN}$ is a supermartingale under the strategy profile $\xi^*$.
Whenever player~$i$ quits under $\xi^*_i$ with positive probability, he is indifferent between quitting and continuing.
Hence, the process $\eta_i$ is a supermartingale under the strategy profile $(C_i,\xi^*_{-i})$ as well.
Whenever a player other than player~$i$ quits with positive probability,
he does so with probability smaller than $\ep$.
By Assumption~\ref{assumption:1} and since $y^k \in \partial D \subset \dR^n_{\geq 0}$ for every $k\in [K]$,
it follows that for every strategy $\xi_i$ of player~$i$,
\[ w_i(y^K) = \eta_i^1 \geq \E_{(\xi_i,\xi^*_{-i})}[\eta_i^{K+1}] \geq \gamma(\xi_i,\xi^*_{-i}) -5\ep. \]

Consider now an abnormal player $i \not\in I_*$.
Under the strategy profile $\xi^*$, whenever a normal player quits, he does so with probability smaller than $\ep$.
Consequently, if player~$i$ quits at some stage, his expected terminal payoff is bounded by $\ep$.
Let $\eta_i^t$ be the expected payoff of player~$i$ from stage~$t$ and on, assuming that if the game is not terminated by the end of the $K$'th kiloblock,
the continuation payoff is 0.
The process $(\eta_i^t)_{t \in \dN}$ is a martingale that attains nonnegative values before the play terminates at stage $t_*$.
Condition~(A.1'') and Eq.~\eqref{equ:62} imply that the probability that the game is not terminated by the end of the $K$'th kiloblock is smaller than $\ep$,
hence as above
\[ w_i(y^K) = \eta_i^1 \geq \E_{(\xi_i,\xi^*_{-i})}[\eta_i^{K+1}] \geq \gamma(\xi_i,\xi^*_{-i}) -2\ep, \]
and the desired result follows.
\end{proof}

\section{Characterizing the Set of Sunspot Equilibrium Payoffs}
\label{section:characterization}

A vector $x \in \dR^N$ is a \emph{sunspot equilibrium payoff}
if it is the limit of payoffs that correspond to sunspot $\ep$-equilibria, as $\ep$ goes to 0.
Theorem~\ref{theorem:1} proves that if the matrix $\widehat R$ is a $Q$-matrix then there is a sunspot equilibrium payoff in the set $D$.
A complete characterization of the set of sunspot equilibrium payoffs seems to be at present out of reach,
yet it may be possible to characterize the set of sunspot equilibrium payoffs that can be generated by quittings of single players.
In this section we identify one case in which the set of these sunspot equilibrium payoffs can be characterized.

In the literature of linear complementarity problems,
a $Q$-matrix is called an \emph{$M$-matrix} if its diagonal entries are positive and all other entries are nonpositive, see Murty (1988).
Since we require that $r^i_i = 0$ for every $i \in I$, we say that a $Q$-matrix is an \emph{$M$-matrix}
if in every row and every column of the matrix there is exactly one positive entry.

\begin{theorem}
If the matrix $\widehat R$ is an $M$-matrix, then any payoff vector in
$\widetilde D$
is a sunspot equilibrium payoff,
where $\widetilde D := \conv\{r^1,\cdots,r^n\} \cap \dR^N_{\geq 0}$.
\end{theorem}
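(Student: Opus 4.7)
The plan is to generalize the first construction of Section~\ref{section:example} from the particular target $(1,1,0,0)$ to an arbitrary $y \in \widetilde D$, exploiting the rigidity of the $M$-matrix structure. The hypothesis gives a fixed-point-free permutation $\pi$ on $I_* = [n]$ such that $\pi(i)$ is the unique coordinate at which $\widehat r^i$ is positive; set $a_i := \widehat r^i_{\pi(i)} > 0$. Each normal coordinate $j \in I_*$ thus has a unique \emph{benefactor} $\pi^{-1}(j)$: only a solo quitting by that player yields a positive contribution in coordinate $j$.

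\textbf{Paired target and one-kiloblock construction.} The core step is, given $y \in \widetilde D$, to construct a companion target $y^* \in \widetilde D$, a subset $K \subseteq I_*$ of ``schedulable'' players with $y^*_i = 0$ for all $i \in K$, and nonnegative coefficients $(p_j)_{j \in \pi(K)}$ with $\beta := \sum_j p_j \in (0,1)$, such that
\[ y \;=\; \sum_{j \in \pi(K)} p_j\, r^{\pi^{-1}(j)} \;+\; (1-\beta)\, y^*. \]
Once $(y^*, K, p)$ exists, the kiloblock runs exactly as in Section~\ref{section:example}: nature selects $j \in \pi(K)$ via the public signal with probability $z_j := p_j/\beta$; player $\pi^{-1}(j)$ then quits over $C$ stages with total probability $\beta$ (per-stage probability $< \ep$ for large $C$); if no termination occurs, the target transitions to $y^*$, independent of nature's choice. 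By design the expected payoff of the kiloblock equals $y$, and the analog of Condition~(F.4) holds: the scheduled quitter $i = \pi^{-1}(j)$ is indifferent because $r^i_i = 0$ and $y^*_i = 0$. The existence of $(y^*, K, p)$ is where the $M$-matrix hypothesis bites: for any convex representation $y = \sum_l \alpha_l r^l$, the unique-positive-entry property of $\widehat R$ forces the key inequality $\alpha_i a_i \geq y_{\pi(i)}$ (all other terms in the expansion of $y_{\pi(i)}$ being nonpositive), which in turn guarantees that one can solve for $y^* \in \widetilde D$ by mirroring the pair $(y_A, y_B) = ((1,1,0,0), (0,0,1,1))$ from Section~\ref{section:example}---choosing $K$ to be the face of $\widetilde D$ complementary to the support of $y$ and letting $\pi$ carry the support across.

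\textbf{Iteration and verification.} Iterate: the sequence of targets $(y^{(k)})_{k \in \dN}$ cycles between $y$ and $y^*$ (or through a longer sequence when the support structure demands), with each kiloblock carrying termination probability $\beta^{(k)}$. Since $\beta^{(k)}$ need not be uniformly bounded below, invoke Theorem~\ref{theorem:approximate} exactly as in Section~\ref{section:proof:2} to cut off after $K$ kiloblocks where the cumulative termination probability exceeds $1-\ep$ while the cumulative target-mismatches $\sum_k \|y^{(k+1)} - y^{*(k)}\|_\infty$ stay $O(\ep)$. The incentive verification then parallels Lemmas~\ref{lemma:equilibrium}--\ref{lemma:deviation}: scheduled quitters are indifferent by $r^i_i = 0 = y^{(k+1)}_i$; non-scheduled normal players lose at most $O(\ep)$ by quitting because per-stage quitting probabilities are tiny and all continuation payoffs lie in $\dR^N_{\geq 0}$; abnormal players enjoy expected payoffs that match $y_k \geq 0$ thanks to the identity built into the decomposition together with the strict positivity guaranteed by Eq.~\eqref{equ:abnormal1}.

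\textbf{Main obstacle.} The decisive use of the $M$-matrix hypothesis is the inequality $\alpha_i a_i \geq y_{\pi(i)}$, which rests squarely on each row of $\widehat R$ having a unique positive entry: only the single term $\alpha_i a_i$ contributes positively to $y_{\pi(i)}$, all other terms being nonpositive. This identity is what allows the candidate $y^*$ to remain inside $\widetilde D$ and the coefficients $p_j$ to be nonnegative. For general $Q$-matrices multiple positive entries per row admit competing contributions, and the paired target $y^*$ may leak out of $\widetilde D$, which is why the complete characterization of sunspot equilibrium payoffs appears substantially more delicate beyond the $M$-matrix case.
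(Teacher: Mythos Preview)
Your sketch carries the right intuition---the $M$-matrix structure does yield a permutation $\pi$ and the row inequality $\alpha_i a_i \geq y_{\pi(i)}$ is a genuine observation---but the argument has two concrete gaps.

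\textbf{Gap 1: the paired target $y^*$ is never actually constructed.} You assert the existence of $(y^*,K,(p_j))$ satisfying the decomposition, but the only justification offered is the inequality $\alpha_i a_i \geq y_{\pi(i)}$ together with the phrase ``mirroring the pair from Section~\ref{section:example}.'' That is not enough. To produce $y^*$ you must (i) choose $K$, (ii) solve the linear system $y_i = \sum_{k\in K} q_k \widehat r^k_i$ for $i\in K$ and prove $q_k\geq 0$ with $\sum_k q_k<1$, and (iii) verify that the residual $(1-\beta)^{-1}\bigl(y-\sum_{k\in K} q_k r^k\bigr)$ lands back in $\widetilde D$---in particular, that it is nonnegative in \emph{every} coordinate and still a convex combination of the $r^j$. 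None of these steps is carried out. Your description ``$K$ is the face complementary to the support of $y$'' is also at odds with the example you cite: for $y=(1,1,0,0)$ the scheduled quitters are players $1,2$, i.e., exactly the support of $y$, not its complement.

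\textbf{Gap 2: the appeal to Theorem~\ref{theorem:approximate} does not deliver the stated conclusion.} Even granting a well-defined map $y\mapsto y^*$, Theorem~\ref{theorem:approximate} produces a finite sequence $(y^k)_{k=1}^K$ whose associated profile has payoff close to $w(y^K)$, \emph{not} to the initial point $y^1$. In Section~\ref{section:proof:2} this is harmless because the goal is merely existence of \emph{some} sunspot equilibrium payoff; here you must hit a \emph{prescribed} $y\in\widetilde D$, and the approximation theorem gives no control over which point is realized. You would need either a cycling argument (so the approximation is unnecessary) or a separate mechanism to steer the endpoint, and you supply neither.

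\textbf{How the paper proceeds.} The paper bypasses both difficulties by invoking the classical fact that an $M$-matrix is inverse positive. Writing $\widehat R^{-1}e^i=\sum_j \lambda^i_j e^j$ with all $\lambda^i_j\geq 0$ yields explicit points $w^i:=\|\lambda^i\|_1^{-1}e^i\in D$ lying on the coordinate axes, and one checks $D=\conv(w^1,\ldots,w^n)$. For each $i$ a small step toward $\widehat r^{\pi^{-1}(i)}$ gives $w^i=\alpha_i\,\widehat r^{\pi^{-1}(i)}+(1-\alpha_i)\,y^{[i]}$ with $y^{[i]}\in D$, hence $y^{[i]}=\sum_j\beta^i_j w^j$ for some probability vector $\beta^i$. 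This is already a finite-state Markov chain on $\{w^1,\ldots,w^n\}$: to implement $w^i$, player $\pi^{-1}(i)$ quits with total probability $\alpha_i$ over a long block, and otherwise nature draws $j\sim\beta^i$ and play moves to $w^j$. Indifference holds because $w^j_{\pi^{-1}(j)}=0$ (the only nonzero coordinate of $w^j$ is the $j$-th, and $\pi$ is fixed-point free). Since the chain has finitely many states, per-kiloblock termination probability is bounded below by $\min_i\alpha_i>0$, so no transfinite or approximation machinery is needed. Finally, any target $y\in\widetilde D$ is a convex combination of the $w^i$, so a single initial public randomization over $\{w^1,\ldots,w^n\}$ hits $y$ exactly.
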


\begin{proof}
It is well known that any $M$-matrix $\widehat R$ is inverse positive, that is, its inverse $\widehat R^{-1}$ is a nonnegative matrix
(see, e.g., Fujimoto and Ranade, 2004).
Fix $i \in [n]$.
Since $\widehat R^{-1}$ is a nonnegative matrix, $ \widehat R^{-1} e^i \in \dR^n_{\geq 0}$,
where $e^i = (0,\cdots,0,1,0,\cdots,0)$ is the $i$'th unit vector in $\dR^n$.
Therefore we can write
\begin{equation}
\label{equ:61}
\widehat R^{-1} e^i = \sum_{j=1}^n \lambda^i_j e^j,
\end{equation}
where $(\lambda^i_j)_{j=1}^n$ are nonnegative numbers, not all of them zero.
Set
\[ \widehat \lambda^i_j := \frac{\lambda^i_j}{\sum_{j=1}^n \lambda^i_j}. \]
Multiplying both sides of Eq.~(\ref{equ:61}) from the left by $\widehat R$ we get
\[ e^i = \sum_{j=1}^n \lambda_j^i \widehat R e^j = \sum_{j=1}^n \lambda^i_j \widehat r^j, \]
so that $w^i := \tfrac{1}{\| \lambda^i\|_1} e^i \in D$, for each $i \in [n]$.
Both convex hulls $\conv(w^1,\cdots,w^n)$ and $\conv(\widehat r^1,\cdots,\widehat r^n)$ are $(n-1)$-dimensional sets such that
$\conv(w^1,\cdots,w^n) \subseteq \conv(\widehat r^1,\cdots,\widehat r^n)$.
Since in every row of $\widehat R$ there is a single positive entry,
it follows that $D = \conv(w^1,\cdots,w^n)$: every element of $D$ can be presented as a weighted average of $w^1,\cdots,w^n$.

Since $\widehat R$ is an $M$-matrix, there is a unique index $j_i \in [n]$ such that $\widehat r^{j_i}_i > 0$.
Since $w^i$ is a convex combination of $\widehat r^1,\cdots,\widehat r^n$,
since $w^i_i > 0$, and since the unique index $k$ such that $\widehat r^k_i > 0$ is $k=j_i$,
it follows that $\lambda^i_{j_i} > 0$.
Since for all coordinates $k \neq i$ we have $\widehat r^{j_i}_k \leq 0$,
for $\alpha_i > 0$ sufficiently small we have $w^i - \alpha_i \widehat r^{j_i} \in \dR^n_{\geq 0}$,
which implies that $y^{[i]} := \frac{w^i - \alpha_i \widehat r^{j_i}}{1-\alpha_i} \in \conv(w^1,\cdots,w^n)$.
Consequently
$w^i = \alpha_i \widehat r^{j_i} + (1-\alpha_i) y^{[i]}$
and there is a probability distribution $\beta^i = (\beta^i_j)_{j \in [n]}$ such that
$y^{[i]} = \sum_{j=1}^n \beta^i_j w^j$.

The reader can verify that for every $i \in I_*$,
the vectors $(y^{[i]})_{i \in [n]}$, $(w^j)_{j \in [n]}$, and $(\beta^i_j)_{i,j \in [n]}$ satisfy the following conditions,
which are analogous to Conditions (F.1)--(F.5):
\begin{itemize}
\item[(F.1')]    $w^i \in \conv(y^{[i]},\widehat r^i)\setminus \{y^{[i]}\}$ for every $i \in [n]$.
\item[(F.2')]    $w^i_j \geq 0$ for every $i,j \in [n]$.
\item[(F.3')]   $y^{[i]} = \sum_{j=1}^n \beta^i_j w^i$, for every $i \in [n]$.
\item[(F.4')]   If $i \in [n]$ and $\beta^i_j > 0$, then $w^i_i = 0$.
\end{itemize}
This implies that we can repeat the construction in Section~\ref{section:proof:2},
yet now the sequence $(y^\alpha)_{\alpha < \alpha_*}$ has a finite range, namely $(y^{[i]})_{i \in [n]}$:
to implement $y^{[i]}$ as a sunspot equilibrium payoff,
nature chooses $j \in I_*$ according to the probability distribution $(\beta^i_j)_{j \in [n]}$ and the players implement $w^j$.
To implement $w^j$, player~$j$ quits along a sufficiently long block with a total probability of $\lambda_j$,
where $\lambda_j$ satisfies $w^i = \lambda_j \widehat r^j + (1-\lambda_j) y^{[j]}$,
and, if he did not quit,
the players implement the vector $y^{[j]}$.

We note that in this case the ordinal $\alpha_*$ is the first countable ordinal $\omega$.
Since $D = \conv(w^1,\cdots,w^n)$,
by adding an initial stage in which nature chooses which of the vectors $(w^i)_{i \in [n]}$ the players implement as a sunspot $\ep$-equilibrium payoff,
we can implement every vector in $\widetilde D$ as a the payoff of a sunspot $\ep$-equilibrium.
\end{proof}

\bigskip

One  distinction between the construction presented in this section and the construction in the general case is that
while in the general case the sequence $(y^\alpha)_{\alpha < \alpha_*}$ was a deterministic sequence,
here this sequence is a stochastic process, that depends on nature's choices.

\section{Discussion and Open Problems}
\label{sec:discussion}


Quitting games are stopping games in which the payoff processes are constant that are independent of time.
Shmaya and Solan (2004) developed a technique that allows reducing the question of existence of $\ep$-equilibrium in stopping games
with integrable payoff processes
to the question of existence of $\ep$-equilibrium in quitting games or absorbing games.
Heller (2012) and Mashiah-Yaakovi (2014) used the approach of Shmaya and Solan (2004) to prove the existence
of normal-form correlated $\ep$-equilibrium in multiplayer stopping games
and of subgame-perfect $\ep$-equilibrium in multiplayer stopping games with perfect information, respectively.
This approach can be used together with our result to show that every multiplayer stopping game admits a sunspot $\ep$-equilibrium.
The proof is analogous to the proofs in Shmaya and Solan (2004), Heller (2012), and Mashiah-Yaakovi (2014).

Quitting games are also a subclass of absorbing games, which are repeated games in which
player may have more than two actions and there are several nonabsorbing entries (see Vrieze and Thuijsman, 1989).
Absorbing games are a subclass of stochastic games.
The next step in the research is to extend our result to absorbing games, and then to stochastic games.

To construct a sunspot $\ep$-equilibrium from the existence of a solution to the auxiliary game with geometric length
we used a transfinite construction, because we could not prove that the per-stage probability that some player quits in the auxiliary game is
uniformly bounded away from 0.
If the existence of a positive lower bound could be proven, then the construction of a sunspot $\ep$-equilibrium would simplify.

One way to evade the need of a transfinite induction is to show that there is $y^0 \in \partial D$ for which the sequence
$(y^\alpha)_{\alpha < \alpha_*}$ contains finitely many distinct values.
This is what happens when the matrix $\widehat R$ is an $M$-matrix, where the process
$(y^\alpha)_{\alpha < \alpha_*}$ contains $n$ distinct values.
Another case where this phenomenon occurs is when each of the vectors $\widehat r^i$ contains exactly one negative coordinate.
In this case one can show that by properly choosing $y^0$, the sequence $(y^\alpha)_{\alpha < \alpha_*}$ contains two distinct values.
We could not identify an example where the use of a sequence $(y^\alpha)_{\alpha < \alpha_*}$ with infinitely many distinct values is necessary.

When the matrix $\widehat R$ is an $M$-matrix,
the set of sunspot equilibrium payoffs that can be generated by single quittings coincides with the set $\widetilde D$.
We do not know whether this coincidence holds for other classes of $Q$-matrices,
or whether we can provide a different characterization to the set of sunspot equilibrium payoffs that can be generated by our construction,
when the matrix $\widehat R$ is a $Q$-matrix that is not an $M$-matrix.

In our construction, the expected payoff of a player after some history may be negative, albeit at least $-\ep$, see Condition~(F.2) in Theorem~\ref{theorem:2}.
It would be interesting to know whether one can ensure that the expected payoff of all players after every history is nonnegative.

Our study also raises questions about $Q$-matrices, a topic that was not extensively studied in the last decades.
In the characterization of the set of sunspot equilibrium payoffs
in Section~\ref{section:characterization} we used the fact that $M$-matrices are inverse positive.
Unfortunately, the inverse of a $Q$-matrix whose diagonal entries are 0 need not be a nonnegative matrix.
One such example is the $(3\times 3)$-matrix whose off-diagonal entries are equal to 1.
In our application the matrix $\widehat R$ satisfies additional properties, for example, each row and each column contains at least one negative entry.
Is it true that every $Q$-matrix whose diagonal entries are 0 and such that
in each row and each column there is a negative entry is inverse positive?
If not, can one characterize the set of inverse-positive $Q$-matrices that satisfy these conditions?
Suppose that the matrix $\widehat R$ is inverse positive;
can one characterize the set of sunspot equilibria payoffs?

One last question that we will mention concerns the columns of the matrix $\widehat R$.
Suppose that there is a normal player~$i$ such that $r^i_j < 0$ for every $j \neq i$.
Thus, all players prefer to quit alone rather than having player~$i$ quit.
Is the matrix $\widehat R$ a $Q$-matrix?
If not, then it follows that when $\widehat R$ is a $Q$-matrix,
and provided the game admits no stationary $\ep$-equilibrium,
every column of the matrix $\widehat R$ contains at least one positive entry.
This property, if true, may help in future study of quitting games.

\end{document}